\documentclass[12pt]{amsart}
\usepackage[utf8]{inputenc}

\usepackage{amsrefs}
\usepackage{supertabular}
\usepackage{amsxtra,amssymb,amsmath,amscd,url,enumitem}

\setenumerate{itemsep=5pt}
\setitemize{itemsep=5pt}

\newlist{enumth}{enumerate}{1}
\setlist[enumth]{label=\emph{(\arabic*)}, ref=(\arabic*)}

\usepackage[utf8]{inputenc}
\usepackage[english]{babel}
\usepackage{fullpage}
\usepackage{scrtime}
\usepackage{datetime}
\usepackage[all]{xy}
\usepackage{tikz-cd}
\usepackage{mathrsfs}
\usepackage{xspace}

\usepackage{tensor}
\usepackage{braket}

\usepackage{todonotes}
\usepackage{tensor}

\usepackage[hypertexnames=false,
  pdfpagemode=UseNone,
  breaklinks=true,
  extension=pdf,
  colorlinks=true,
  linkcolor=blue,
  citecolor=red,
  urlcolor=blue,
 ]{hyperref}

\usepackage{needspace}

\shortdate

\DeclareMathSymbol{A}{\mathalpha}{operators}{`A}%
\DeclareMathSymbol{B}{\mathalpha}{operators}{`B}%
\DeclareMathSymbol{C}{\mathalpha}{operators}{`C}%
\DeclareMathSymbol{D}{\mathalpha}{operators}{`D}%
\DeclareMathSymbol{E}{\mathalpha}{operators}{`E}%
\DeclareMathSymbol{F}{\mathalpha}{operators}{`F}%
\DeclareMathSymbol{G}{\mathalpha}{operators}{`G}%
\DeclareMathSymbol{H}{\mathalpha}{operators}{`H}%
\DeclareMathSymbol{I}{\mathalpha}{operators}{`I}%
\DeclareMathSymbol{J}{\mathalpha}{operators}{`J}%
\DeclareMathSymbol{K}{\mathalpha}{operators}{`K}%
\DeclareMathSymbol{L}{\mathalpha}{operators}{`L}%
\DeclareMathSymbol{M}{\mathalpha}{operators}{`M}%
\DeclareMathSymbol{N}{\mathalpha}{operators}{`N}%
\DeclareMathSymbol{O}{\mathalpha}{operators}{`O}%
\DeclareMathSymbol{P}{\mathalpha}{operators}{`P}%
\DeclareMathSymbol{Q}{\mathalpha}{operators}{`Q}%
\DeclareMathSymbol{R}{\mathalpha}{operators}{`R}%
\DeclareMathSymbol{S}{\mathalpha}{operators}{`S}%
\DeclareMathSymbol{T}{\mathalpha}{operators}{`T}%
\DeclareMathSymbol{U}{\mathalpha}{operators}{`U}%
\DeclareMathSymbol{V}{\mathalpha}{operators}{`V}%
\DeclareMathSymbol{W}{\mathalpha}{operators}{`W}%
\DeclareMathSymbol{X}{\mathalpha}{operators}{`X}%
\DeclareMathSymbol{Y}{\mathalpha}{operators}{`Y}%
\DeclareMathSymbol{Z}{\mathalpha}{operators}{`Z}%



\renewcommand{\leq}{\leqslant}
\renewcommand{\geq}{\geqslant}
 

\numberwithin{equation}{section}



\newcommand{\uple}[1]{\text{\boldmath${#1}$}}




\renewcommand{\mathcal}{\mathscr}

\newcommand{\dunion}{\mathrel{\uple{\sqcup}}}
\newcommand{\ccirc}{\mathrel{\odot}} 


\newcommand{\Cc}{\mathbb{C}}
\newcommand{\Nn}{\mathbb{N}}

\newcommand{\Zz}{\mathbb{Z}}

\newcommand{\Rr}{\mathbb{R}}

\newcommand{\Qq}{\mathbb{Q}}

\newcommand{\mcN}{\mathscr{N}}

\newcommand{\expect}{\mathbb{E}}

\def\loccit{loc.\kern3pt cit.{}\xspace}
\def\cf{see\kern.3em}
\def\Cf{See\kern.3em}
\def\eg{e.g.\kern.3em}
\def\ie{i.e.,\ }
\def\resp{\text{resp.}\kern.3em}

\newcommand{\mods}[1]{\,(\mathrm{mod}\,{#1})}





\newcommand{\injecte}{\hookrightarrow}


\DeclareMathOperator{\Fix}{Fix}

\DeclareMathOperator{\Reel}{Re}

\DeclareMathOperator{\tr}{{}^t\!}

\DeclareMathOperator{\Tr}{Tr}
\DeclareMathOperator{\Hom}{Hom}
\DeclareMathOperator{\End}{End}
\DeclareMathOperator{\Aut}{Aut}

\DeclareMathOperator{\dual}{D}

\DeclareMathOperator{\Rep}{\underline{\mathrm{Rep}}}
\DeclareMathOperator{\Del}{\underline{\mathrm{Rep}}}


\renewcommand{\rho}{\varrho}



\DeclareMathOperator{\GL}{\mathbf{GL}}
\DeclareMathOperator{\Aff}{\mathbf{Aff}}





\DeclareMathSymbol{\gena}{\mathord}{letters}{"3C}
\DeclareMathSymbol{\genb}{\mathord}{letters}{"3E}




\theoremstyle{plain}
\newtheorem{theorem}{Theorem}[section]
\newtheorem*{theorem*}{Theorem}
\newtheorem{lemma}[theorem]{Lemma}
\newtheorem{corollary}[theorem]{Corollary}
\newtheorem{conjecture}[theorem]{Conjecture}

\newtheorem{proposition}[theorem]{Proposition}

\theoremstyle{remark}

\theoremstyle{definition}

\newtheorem{definition}[theorem]{Definition}

\newtheorem{example}[theorem]{Example}
\newtheorem{remark}[theorem]{Remark}




\newcommand{\mcC}{\mathscr{C}}



\renewcommand{\geq}{\geqslant}
\renewcommand{\leq}{\leqslant}





\DeclareMathOperator{\Std}{Std}

\setlength{\parskip}{1mm}

\begin{document}

\title{Fixed-point statistics from spectral measures on tensor envelope
  categories}

\author{Arthur Forey}
\address[A. Forey]{Univ.\,Lille, CNRS, UMR 8524 - Laboratoire Paul Painlevé, 59000 Lille, France} 
\email{arthur.forey@univ-lille.fr}

\author{Javier Fres\'an}
\address[J. Fres\'an]{Sorbonne Université and Université Paris Cité, CNRS, IMJ-PRG, 75005 Paris, France}
\email{javier.fresan@imj-prg.fr}

\author{Emmanuel Kowalski}
\address[E. Kowalski]{D-MATH, ETH Z\"urich, R\"amistrasse 101, 8092 Z\"urich, Switzerland} 
\email{kowalski@math.ethz.ch}

\subjclass[2010]{11R44, 18M05, 18M25, 44A60, 60B10, 60B15}

\keywords{Spectral measure, moment problem, tensor category, tensor
  envelope, fixed-point statistics, random permutations, problème des
  rencontres, Chebotarev's density theorem}

\begin{abstract}
  We prove old and new convergence statements for fixed-points
  statistics and characters of symmetric groups using tensor
  envelope categories, such as the Deligne\nobreakdash--Knop category of
  representations of the ``symmetric group'' $S_t$ for an
  indeterminate~$t$. We also speculate on a generalization of Chebotarev's density theorem to pseudopolynomials. 
\end{abstract}

\maketitle 

\section{Introduction}

Spectral measures associated to operators on Hilbert
spaces are key tools in functional analysis and its applications, for
instance to quantum mechanics and ergodic theory. Recall that a
continuous normal linear operator $u\colon E\to E$ on a Hilbert space $E$ has a compact spectrum and that, for each vector~$x\in E$, there exists a unique bounded positive Radon measure~$\mu_x$ on the set of complex numbers~$\Cc$ such that the equality 
\[
\int_{\Cc}f\, d\mu_x= \langle x\,|\,f(u)x\rangle
\]
holds for all continuous functions~$f \colon \Cc \to \Cc$; see, for instance, \cite[IV,\,p.\,190,\,déf.\,2]{ts345}. This
measure is supported on the spectrum of $u$ and is called the \emph{spectral measure of~$u$ relative to~$x$}. In
particular, we have
\[
\int_{\Cc}z^a\bar{z}^b\, d\mu_x(z)= \langle x\,|\,u^a(u^*)^b(x)\rangle
\]
for all non-negative integers~$a$ and~$b$. In this paper, we consider an
analogue of this last relation for objects in symmetric monoidal
categories.

\begin{definition}[Spectral measure]\label{def-spectral}
  Let~$\mcC$ be a symmetric monoidal category with an en\-dofunctor
  $\dual$, and let $i$ be a complex-valued invariant of~$\mcC$, by which
  we mean a map from the set of isomorphism classes of objects of~$\mcC$
  to~$\Cc$. Let~$M$ be an object of~$\mcC$. A positive measure~$\mu=\mu(i, M)$
  on~$\Cc$ is called a \emph{spectral measure of~$M$ relative to~$i$} if
  the equality
  \[
    \int_{\Cc} z^a\, \bar{z}^b\, d\mu(z)=i(M^{\otimes a}\otimes
    \dual(M)^{\otimes b})
  \]
  holds for all non-negative integers $a$ and~$b$.
\end{definition}

We will think of $\dual$ as a duality functor on $\mcC$, although no extra condition is required for this definition. In general, the measure $\mu$ depends on $i$ and $M$ and might not be unique (see Remark \ref{non-uniqueness}). The basic motivation is provided by the following example.

\begin{example}\label{ex:SatoTate}
  Let $r\geq 1$ be an integer, and let $G\subset \GL_r(\Cc)$ be a compact
  group with probability Haar measure~$\nu$. Let
  $\mcC$ be the category of finite-dimensional continuous complex representations of~$G$,
  and~$D$ the contragredient endofunctor of~$\mcC$. By representation
  theory of compact groups, the direct image of~$\nu$ by the trace $\Tr \colon G \to \Cc$ is a spectral measure $\mu=\Tr_\ast(\nu)$  of the ``tautological''
  object of~$\mcC$ corresponding to the inclusion of~$G$ in~$\GL_r(\Cc)$, relative to the invariant given on a representation $\rho$ by 
 \[
 i(\rho)=\dim_{\Cc}(\rho^G)=\dim_{\Cc} \Hom(1, \rho), 
 \] where $1$ denotes the trivial one-dimensional representation of $G$. 
  In number theory, measures of this kind are often called \emph{Sato--Tate
  measures}, the original example being $\mathrm{SU}_2 \subset \GL_2(\Cc)$. 
\end{example}

By a somewhat ad hoc construction, one can also see the spectral
measures from functional analysis as instances of spectral measures in
the sense of Definition \ref{def-spectral}.

\begin{example} Let $E$ be a Hilbert space, and let $A$ be a commutative unitary $C^\ast$-subalgebra of $\End(E)$, \eg the closure of the span of all $u^a$ and $(u^\ast)^b$ for some normal endomorphism $u$. We consider the category $\mcC$ with objects the elements of $A$ and morphisms given by 
\[
\Hom_{\mcC}(u, v)=\begin{cases} \Cc\cdot 1_u & \text{if $u=v$,} \\ \{0\} & \text{else.} \end{cases} 
\] We endow $\mcC$ with the tensor product given on objects by $u \otimes v=u \circ v$, and by the obvious rule on morphisms, \ie the tensor product of non-zero morphisms $f=s 1_{u_1}$ and~$g=t 1_{u_2}$ is equal to $f \otimes g=st 1_{u_1 \otimes u_2}$. The unit object is the identity map $1_E \in A$, and the category is symmetric monoidal because $A$ is commutative. We consider the ``duality'' functor $D(u)=u^\ast$. 

Let us now fix a vector $x \in E$ and consider the invariant $i$ on $\mcC$ defined by the assignment~$i_x(u)=\langle x\,|\,u(x)\rangle$ for all $u \in A$. Then the equality
\[
i_x(u^{\otimes a} \otimes D(u)^{\otimes b})=\langle x\,|\,u^a(u^*)^b(x)\rangle
\] holds for all non-negative integers $a$ and $b$, and hence the functional-analytic spectral measure~$\mu_x$ is a spectral measure of $u$ relative to the invariant $i_x$. 
\end{example}

Our first main result is a new proof of a statement that goes back to
the very early studies of probability theory through the analysis of
card games and the like (see the historical paper of
Takács~\cite{takacs} for references). Interestingly, the tensor
categories that will arise in the proof are the categories of
representations of the ``symmetric group'' $S_t$ for an indeterminate~$t$ of
Deligne \cite{deligne} and Knop~\cite{knop}. Another, rather different, construction of this category has recently been given by Harman and Snowden~\cite[\S\,15]{harman-snowden}.

\begin{theorem}[``Problème des rencontres''; Montmort~\cite{montmort};
  N. Bernoulli I; de Moivre~\cite{de-moivre}]\label{th-fix} Let~$(X_n)_{n\geq 1}$ be a sequence of random variables with $X_n$ a
  uniformly chosen random permutation in the symmetric group~$S_n$. The
  sequence $(|\Fix(X_n)|)_{n\geq 1}$, where~$\Fix(\sigma)$ denotes the
  set of fixed points of~$\sigma\in S_n$, converges in law to a Poisson
  distribution with parameter~$1$.
\end{theorem}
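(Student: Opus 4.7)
The plan is to apply the method of moments, computing the $a$-th moment of $|\Fix(X_n)|$ via characters and then reading off the limiting value from the Deligne--Knop tensor envelope category $\Rep(S_t)$.

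First, for $\sigma \in S_n$ one has $|\Fix(\sigma)| = \Tr(\sigma \mid V_n)$, where $V_n = \Cc^n$ is the natural permutation representation. Hence
\[
\expect[|\Fix(X_n)|^a] = \expect[\Tr(\sigma \mid V_n^{\otimes a})] = \dim_\Cc\Hom_{S_n}(\un, V_n^{\otimes a}),
\]
and the right-hand side counts the orbits of $S_n$ on $\{1,\dots,n\}^a$. For $n \geq a$ these orbits are in bijection with set partitions of $\{1,\dots,a\}$, so $\expect[|\Fix(X_n)|^a] = B_a$ (the $a$-th Bell number) whenever $n \geq a$.

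Next, one identifies this stable value with the spectral measure of a tautological object in $\Rep(S_t)$. The object $[t]$ is self-dual and by construction satisfies $\dim\Hom(\un, [t]^{\otimes a}) = B_a$ for all $a \geq 0$, independently of $t$. Relative to the invariant $i(M) = \dim\Hom(\un, M)$ introduced in Example~\ref{ex:SatoTate}, a spectral measure $\mu$ of $[t]$ must then satisfy
\[
\int_\Cc z^a\, d\mu(z) = B_a
\]
for every $a$. By Dobinski's identity $B_a = e^{-1}\sum_{k \geq 0} k^a/k!$, these are precisely the moments of the Poisson law of parameter~$1$; since that law is moment-determinate (its moment generating function $\exp(e^z-1)$ is entire), $\mu$ is forced to be the Poisson$(1)$ distribution.

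The proof then concludes by the method of moments: the finite-$n$ moments $\expect[|\Fix(X_n)|^a]$ stabilize to $B_a$ as soon as $n \geq a$, the sequence $(B_a)_{a \geq 0}$ determines the Poisson$(1)$ law uniquely, hence $|\Fix(X_n)|$ converges in law to Poisson$(1)$. The substantive step is the categorical one: verifying that the Deligne--Knop Hom dimension $\dim\Hom(\un, [t]^{\otimes a})$ really equals $B_a$ (by identifying a basis indexed by set partitions of $\{1,\dots,a\}$) and, crucially, that the interpolation functor $\Rep(S_t) \to \Rep(S_n)$ at $t = n$ reproduces the $S_n$-equivariant dimension precisely in the stable range $n \geq a$. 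Once this bridge between the tensor envelope and $\Rep(S_n)$ is in place, the arithmetic input — Bell numbers, Dobinski's formula, and the method of moments — is entirely classical.
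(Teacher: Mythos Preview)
Your proposal is correct and follows the same overall architecture as the paper: method of moments, Dobi\'nski's formula, and the identification of Bell numbers with $\dim\Hom(\un_t,\Std_t^{\otimes a})$ in $\Rep(S_t)$. The one substantive difference is how you establish the stable-range equality $\dim_{\Cc}\Hom_{S_n}(\un,V_n^{\otimes a})=B_a$ for $n\geq a$: you do it by the elementary count of $S_n$-orbits on $\{1,\dots,n\}^a$, whereas the paper (Lemma~\ref{lm-dk-specialization}) deduces it from the semisimplification functor $\mcC_n\to\mathrm{Rep}(S_n)$ together with Knop's criterion for the vanishing of the tensor radical $\mcN_n(\un_n,\Std_n^{\otimes a})$. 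Your route is shorter and self-contained; the paper's route is deliberately structural, since the aim is to exhibit the Poisson law as the spectral measure of~$\Std_t$ and to illustrate machinery that will be reused for Theorem~\ref{th-fi}, where no analogous orbit count is available. Given that you already computed both sides as~$B_a$ directly, your final paragraph invoking the ``interpolation functor'' as the crucial bridge is redundant rather than wrong. Two minor points: positivity of the invariant $i$ on $\Rep(S_t)$ is Corollary~\ref{cor-general-invariant}, not Example~\ref{ex:SatoTate} (which concerns compact groups); and the paper writes the tautological object as $\Std_t=[\{1\}]$, not~$[t]$.
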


Recall that the Poisson distribution with parameter a positive real number~$\lambda$ is
the measure~$P_{\lambda}$ supported on non-negative integers given by 
\[
  P_{\lambda}(\{r\})=e^{-\lambda}\frac{\lambda^r}{r!}
\]
for all integers $r\geq 0$. The meaning of the statement (and how it was
originally proved) is therefore that the formula
\begin{equation}\label{eqn:limit}
\lim_{n\to +\infty} \frac{1}{n!}  |\{\sigma\in S_n \text{ with } 
|\Fix(\sigma)|=r\}|=\frac{1}{e}\frac{1}{r!}
\end{equation}
holds for all integers $r\geq 0$. Neither categories nor spectral
measures appear in the statement; the link comes from the fact that the
limit Poisson distribution arises as the spectral measure of a suitable object
in the Deligne--Knop category $\mathcal{C}_t=\Rep(S_t)$. This is a $\Cc(t)$\nobreakdash-linear semisimple tensor category that ``interpolates'' the categories of representations of the symmetric groups $S_n$. From that point of view, an interesting feature of our proof is that it shows how the Poisson distribution (maybe the most natural measure on non-negative integers) is some kind of analogue of the Sato--Tate measures from Example \ref{ex:SatoTate}. In Section~\ref{sec-3.3}, we will see a similar
statement for the complex gaussian distribution. By Chebotarev's density theorem, the probability that a random permutation in $S_n$ has $r$ fixed points governs the asymptotic density of the set of primes $p$ such that a generic polynomial of degree $n$ with integer coefficients has $r$ roots modulo $p$. In
Section~\ref{sec-speculations}, we will speculate on a generalization of Chebotarev's density theorem that could explain the occurrence of the limit \eqref{eqn:limit} in numerical experiments involving certain pseudopolynomials. 

\needspace{10\baselineskip}
Our second main result is the following new theorem.

\begin{theorem}\label{th-fi}
  Let~$m\geq 1$ be an integer and let~$\lambda$ be a partition
  of~$m$ with parts
\[
  \lambda_1\geq \lambda_2\geq \cdots.
\] For~$n\geq m+\lambda_1$, let~$\pi_{\lambda,n}$ be the representation
  of~$S_n$ corresponding to the partition
 \[
  \lambda^{(n)}=(n-m,\lambda_1,\lambda_2,\ldots),
 \]
  and let~$\chi_{\lambda,n}\colon S_n\to\Cc$ be its character. Then the sequence of measures~$(\chi_{\lambda,n}(X_n))_{n \geq m+\lambda_1}$, where as
  before~$X_n$ is a uniformly distributed random permutation on~$S_n$, converges in law as $n\to+\infty$ to a spectral measure of the simple
  object~$x_{\lambda,t}$ of the Deligne--Knop category~$\mathcal{C}_t$ associated to the partition~$\lambda$,
  relative to the invariant
  \[
  i(M)=\dim_{\Cc(t)} \Hom(1_{t}, M),
  \] where $1_t$ is the unit objet of~$\mathcal{C}_t$.  
\end{theorem}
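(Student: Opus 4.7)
The plan is to apply the method of moments. Writing $Y_n = \chi_{\lambda,n}(X_n)$, I want to show that the mixed moments $\expect[Y_n^a \bar Y_n^b]$ converge as $n\to+\infty$ to
\[
i\bigl(x_{\lambda,t}^{\otimes a} \otimes \dual(x_{\lambda,t})^{\otimes b}\bigr) = \dim_{\mathbf{C}(t)}\Hom_{\mathcal{C}_t}\bigl(1_t, x_{\lambda,t}^{\otimes a}\otimes \dual(x_{\lambda,t})^{\otimes b}\bigr),
\]
and that these limit moments characterise a unique positive measure $\mu$ on $\mathbf{C}$. Such a $\mu$ will by construction be a spectral measure of $x_{\lambda,t}$ relative to $i$, and convergence of the laws of $Y_n$ to $\mu$ will then follow from the method of moments.

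The first step is purely representation-theoretic: a standard application of character orthogonality in $\Rep(S_n)$ identifies moments with Hom-dimensions,
\[
\expect[Y_n^a \bar Y_n^b] = \frac{1}{n!}\sum_{\sigma\in S_n}\chi_{\lambda,n}(\sigma)^a\,\overline{\chi_{\lambda,n}(\sigma)}^b = \dim_{\mathbf{C}}\Hom_{S_n}\bigl(1, \pi_{\lambda,n}^{\otimes a}\otimes (\pi_{\lambda,n}^{\ast})^{\otimes b}\bigr),
\]
since the average of a character of $S_n$ over $S_n$ equals the dimension of the invariants of the corresponding representation. Note that $\chi_{\lambda,n}$ takes real values, so in fact $\bar Y_n = Y_n$ and the limit measure will be supported on $\mathbf{R}$.

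The second step exploits the interpolation property that defines the Deligne--Knop category $\mathcal{C}_t$. The simple object $x_{\lambda,t}$ is constructed so that the specialisation functor $\mathcal{C}_t\to\Rep(S_n)$ at $t=n$ sends it to $\pi_{\lambda,n}$ for $n$ large, and, more crucially, so that for each fixed pair $(a,b)$ there exists an integer $N_0(\lambda,a,b)$ for which the equality
\[
\dim_{\mathbf{C}(t)}\Hom_{\mathcal{C}_t}\bigl(1_t, x_{\lambda,t}^{\otimes a}\otimes \dual(x_{\lambda,t})^{\otimes b}\bigr) = \dim_{\mathbf{C}}\Hom_{S_n}\bigl(1, \pi_{\lambda,n}^{\otimes a}\otimes (\pi_{\lambda,n}^{\ast})^{\otimes b}\bigr)
\]
holds for all $n\geq N_0$. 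Combined with the first step, this shows that the sequence $(\expect[Y_n^a\bar Y_n^b])_n$ is in fact eventually constant, equal to the right-hand side of Definition~\ref{def-spectral}.

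The last step is to conclude by the method of moments. Existence of a positive measure $\mu$ on $\mathbf{C}$ with the prescribed mixed moments follows from a tightness argument: since $\expect[|Y_n|^2]$ stabilises by the previous step, the sequence of laws of $Y_n$ is tight, and by Prokhorov's theorem it admits weak limit points, each of which must have the correct mixed moments. The main obstacle is the uniqueness of $\mu$, or equivalently the moment-determinacy of the limit sequence $M_{a,b} := \dim_{\mathbf{C}(t)}\Hom_{\mathcal{C}_t}(1_t, x_{\lambda,t}^{\otimes a}\otimes \dual(x_{\lambda,t})^{\otimes b})$. For this I would verify Carleman's condition: using the combinatorial description of Hom-spaces in $\mathcal{C}_t$ in terms of partition diagrams in the spirit of Deligne and Knop, which for $\lambda=\emptyset$ reduces to counting set partitions and thus yields Bell numbers (recovering Theorem~\ref{th-fix} along the way), one obtains Bell-type upper bounds for $M_{k,k}$, a growth rate for which Carleman's criterion is classical. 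Once $\mu$ is uniquely pinned down, convergence of $Y_n$ to $\mu$ in law is immediate.
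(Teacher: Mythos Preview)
Your first two steps are correct and align with the paper: moments of $\chi_{\lambda,n}(X_n)$ are Hom-dimensions in $\Rep(S_n)$ by character orthogonality, and these stabilise to the corresponding Hom-dimensions in $\mathcal{C}_t$ for $n$ large enough (this is the paper's Lemma~\ref{lm-dk-2}, proved via Deligne's base-change formalism together with Knop's criterion for the vanishing of the tensor radical).

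The gap is in your third step. The ``Bell-type'' bound you have in mind is essentially $i(x_{\lambda,t}^{\otimes 2k}) \leq B_{2mk}$: the object $x_{\lambda,t}$ is a summand of a basic object indexed by a set of size $m$, so its $2k$-th tensor power sits inside the basic object attached to a set of size $2mk$, whose Hom from the unit has dimension $B_{2mk}$. But then the Carleman sum $\sum_k B_{2mk}^{-1/(2k)}$ behaves like $\sum_k (\log k / k)^m$, which \emph{converges} as soon as $m\geq 2$. So Carleman's criterion is inconclusive for every partition of size at least $2$, and the moment-determinacy of the limit is left unproved. Without determinacy, your tightness argument only shows that every weak limit point has the correct moments, not that there is a single limit.

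The paper avoids this obstacle by reversing the order of the argument. Rather than deducing convergence in law from moment convergence, it first establishes convergence in law directly: via character polynomials one has $\chi_{\lambda,n}(\sigma)=q_\lambda(\ell_1(\sigma),\ell_2(\sigma),\dots)$ for a fixed polynomial $q_\lambda$ and all large $n$, and the Diaconis--Shahshahani theorem gives joint convergence of the cycle counts $(\ell_i(X_n))_i$ to independent Poissons $(P_{1/i})_i$. This produces a concrete limit law $\mu_\lambda=q_\lambda(P_1,P_{1/2},\dots)$ with no determinacy hypothesis required. Only then does the paper compute the moments of $\mu_\lambda$ and identify them with $i(x_{\lambda,t}^{\otimes a})$ through the stabilisation lemma. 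The authors themselves remark, immediately after that lemma, that it would be interesting to prove convergence in law without the character-polynomial detour; your proposal is exactly such an attempt, but the determinacy step would need a genuinely new idea beyond Bell-number estimates.
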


This second result has a corollary which relates it to the theory of
FI-modules of Church, Ellenberg and Farb~\cite{cef}. An \emph{FI-module} over a field $k$ is a functor $V$ from the category with objects finite sets and morphisms injective maps to the category of $k$-vector spaces. For each~$n \geq 0$, we write $V_n$ for the image of the set $\{1, \dots, n\}$; note that $V_n$ has a natural structure of representation of $S_n$. An FI-module $V$ is called \emph{finitely generated} if there exists a finite set of elements $x_i \in V_{n_i}$ that do not ``lie'' on a proper subfunctor of $V$. We refer the reader to the introduction of~\cite{cef} for a list of examples of finitely-generated FI-modules arising in algebra, geometry and
  topology.

\begin{corollary}
  Let~$V=(V_n)_{n\geq 0}$ be a finitely-generated \emph{FI}-module
  over~$\Cc$. For $n\geq 0$, let~$\xi_n$ be the character of~$V_n$ as an
  $S_n$-representation. The sequence of measures $(\xi_n(X_n))_{n \geq 0}$ converges in law as $n\to+\infty$ to a combination of spectral measures of objects of~$\mcC_t$
  relative to the invariant~$i$ from Theorem~\ref{th-fi}.
\end{corollary}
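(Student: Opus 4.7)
The plan is to reduce the corollary to Theorem~\ref{th-fi} by means of the structure theory of finitely generated FI-modules. By the main stability results of Church, Ellenberg and Farb~\cite{cef}, for such a $V$ there exist finitely many partitions $\lambda^{(1)},\ldots,\lambda^{(k)}$ and non-negative integer multiplicities $c_1,\ldots,c_k$ such that, for all $n$ sufficiently large,
\[
V_n \cong \bigoplus_{j=1}^{k} c_j\, \pi_{\lambda^{(j)},n}.
\]
Consequently the character satisfies $\xi_n = \sum_j c_j\, \chi_{\lambda^{(j)},n}$ for $n$ large. Setting $N_t = \bigoplus_j c_j\, x_{\lambda^{(j)},t}$, an object of $\mathcal{C}_t$ obtained as a direct sum (``combination'') of the simple objects appearing in Theorem~\ref{th-fi}, the corollary amounts to showing that the law of $\xi_n(X_n)$ converges to a spectral measure of $N_t$ relative to the invariant~$i$.

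I would then proceed by the method of moments, exactly as in the proof of Theorem~\ref{th-fi}. For non-negative integers $a$ and $b$, the orthogonality of characters on $S_n$ yields
\[
\expect\!\left[\xi_n(X_n)^a\,\overline{\xi_n(X_n)}^b\right] = \frac{1}{n!}\sum_{\sigma\in S_n}\xi_n(\sigma)^a\,\overline{\xi_n(\sigma)}^b = \dim_{\Cc}\Hom_{S_n}\!\left(\mathbf{1},V_n^{\otimes a}\otimes (V_n^{\ast})^{\otimes b}\right).
\]
Expanding through the decomposition above, this dimension becomes a fixed polynomial in the $c_j$ whose coefficients are dimensions of invariants of tensor products of the $\pi_{\lambda^{(j)},n}$ and their duals. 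The interpolation property of the Deligne--Knop category already used in the proof of Theorem~\ref{th-fi} ensures that, for each fixed multi-indices $(\alpha_j),(\beta_j)$, such a dimension stabilizes and equals $\dim_{\Cc(t)}\Hom(1_t, \bigotimes_j x_{\lambda^{(j)},t}^{\otimes \alpha_j}\otimes \dual(x_{\lambda^{(j)},t})^{\otimes \beta_j})$ for $n$ large. Summing back, the limit of the $(a,b)$-moments of $\xi_n(X_n)$ is $i(N_t^{\otimes a}\otimes \dual(N_t)^{\otimes b})$, which by Definition~\ref{def-spectral} is the $(a,b)$-moment of any spectral measure of $N_t$.

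The main obstacle lies in upgrading this convergence of moments to convergence in law, the same difficulty already confronted in Theorems~\ref{th-fix} and~\ref{th-fi}. One needs (i) tightness of the sequence $(\xi_n(X_n))_n$ to extract a weak-$\ast$ limit of its laws, and (ii) a moment-determinacy criterion, such as Carleman's, to ensure that this limit is uniquely characterized by the sequence of moments computed above. Both points can be handled uniformly by the same arguments used for the individual simple objects $x_{\lambda,t}$, since the moments of $\xi_n(X_n)$ are bounded by polynomial combinations (with coefficients depending only on the fixed data $(\lambda^{(j)},c_j)$) of the moments of the $\chi_{\lambda^{(j)},n}(X_n)$, and hence inherit the Poisson-type tail bounds produced in the proof of Theorem~\ref{th-fi}.
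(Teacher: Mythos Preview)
Your reduction via representation stability is exactly what the paper does: by~\cite[Prop.\,3.3.3]{cef}, for all large $n$ the character $\xi_n$ is a fixed polynomial in the $\chi_{\lambda,n}$ (in fact linear, as you observe), and the paper's proof is then one sentence invoking Theorem~\ref{th-fi}.

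The gap is in your final paragraph. You assert that tightness and moment-determinacy ``inherit the Poisson-type tail bounds produced in the proof of Theorem~\ref{th-fi}'', but no such bounds are produced there. Reread Lemma~\ref{eqn:lemmalimitcycle}: convergence in law of $\chi_{\lambda,n}(X_n)$ is \emph{not} obtained by verifying a Carleman-type condition on the moments; it comes directly from the continuous mapping theorem, using that $\chi_{\lambda,n}$ is, for large $n$, a fixed character polynomial $q_\lambda$ in the cycle-counts $\ell_i$, and that the $(\ell_i(X_n))_i$ converge \emph{jointly} to independent Poissons. Only afterwards are the limiting moments identified. So the ``same arguments'' you invoke do not supply Carleman for the $x_{\lambda,t}$, and your route would require a separate growth estimate on $i(N_t^{\otimes a})$ that is nowhere in the paper. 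The correct argument is simply to repeat that of Lemma~\ref{eqn:lemmalimitcycle}: since $\xi_n$ is (eventually) a fixed polynomial in finitely many $\ell_i$, the joint convergence of the $\ell_i(X_n)$ gives convergence in law of $\xi_n(X_n)$ at once, and your moment computation then identifies the limit as a spectral measure of $N_t=\bigoplus_j c_j\,x_{\lambda^{(j)},t}$. Note, incidentally, that the bare \emph{statement} of Theorem~\ref{th-fi}---marginal convergence of each $\chi_{\lambda,n}(X_n)$---would not suffice on its own either, since a sum of random variables each converging in law need not converge; it is really the joint convergence hidden in the proof of Lemma~\ref{eqn:lemmalimitcycle} that does the work.
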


\begin{proof}
  A fundamental result of the theory of
  FI-modules~\cite[Prop.\,3.3.3]{cef} implies the existence of a 
  polynomial $Q\in \Cc[(T_{\lambda})_{\lambda}]$ (in indeterminates
  parameterized by all partitions of all integers~$m\geq 1$) satisfying $\xi_n=Q((\chi_{\lambda,n})_{\lambda})$
  for all large enough $n$, and hence the corollary follows
  immediately from Theorem~\ref{th-fi}.
\end{proof}



This paper is intended as a first glimpse on a subject that deserves further exploration. We hope that the simple example of an application of
spectral measures to classical problems will motivate the reader's
interest in this notion.

\subsection*{Conventions.} Let $X$ be a set. A \emph{partition} of~$X$ is a
set of non-empty subsets of~$X$, pairwise disjoint and with union equal
to~$X$. Note that this definition constrasts with that of Bourbaki (E,
II, p.~29, déf.~7), where a partition is a \emph{family} of subsets
of~$X$, allowing the empty set.  
  
\subsection*{Acknowledgements.} We would like to thank Jordan Ellenberg and Johannes Flake for fruitful discussions on spectral measures, as well as the anonymous referee for his or her interest in this notion and the many kind and generous suggestions to develop it further.   During the preparation of this project, A.\,F. was partially supported by the SNF Ambizione grant PZ00P2\_193354 and J.\,F. was partially supported by the grant ANR-18-CE40-0017 of the Agence Nationale de la Recherche.
  
\section{Existence and uniqueness of spectral measures in tensor categories}
  
From now on, we only consider $k$-linear tensor categories, for some
field~$k$, in the sense of Deligne~\cite[1.2]{deligne2}. We always use the
duality functor of such a category as the endofunctor~$D$ in
Definition~\ref{def-spectral}. An object $M$ is called \emph{self-dual}
if there exists an isomorphism $M\simeq \dual(M)$.

\begin{proposition}[Spectral measures for self-dual objects]
  Let~$\mcC$ be a tensor category in which every object is
  self-dual, and let $i$ be an $\Rr$-valued \emph{additive} invariant of~$\mcC$.
  If 
  \begin{equation}\label{eq-pos2}
    2i(M\otimes N)\leq 
    i(M\otimes M)+i(N\otimes N)
  \end{equation}
  holds for all objects~$M$ and~$N$ of $\mcC$, then every object
  of~$\mcC$ admits a spectral measure relative to~$i$ which is supported
  on~$\Rr$.
\end{proposition}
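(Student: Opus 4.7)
The plan is to reduce the proposition to the classical Hamburger moment problem. Since every object of $\mcC$ is self-dual, we have $\dual(M)\simeq M$, and since $\bar{z}=z$ for $z\in\Rr$, the identity defining a spectral measure supported on~$\Rr$ simplifies to
\[
\int_{\Rr} x^{a+b}\, d\mu(x) \,=\, i(M^{\otimes(a+b)}) \,=:\, m_{a+b}.
\]
One thus seeks a positive Radon measure on~$\Rr$ with moment sequence $(m_n)_{n\geq 0}$. By Hamburger's theorem, such a measure exists if and only if the associated Hankel forms are positive semidefinite: for every $N\geq 0$ and every $(c_0,\ldots,c_N)\in\Rr^{N+1}$,
\[
\sum_{a,b=0}^{N} c_a c_b \, m_{a+b} \,\geq\, 0.
\]

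By continuity of this quadratic form in $(c_0,\ldots,c_N)$ and density of $\Qq^{N+1}$ in~$\Rr^{N+1}$ (after clearing denominators), it suffices to verify the inequality for integer tuples. Given such a tuple, decompose $c_j=c_j^{+}-c_j^{-}$ with $c_j^{\pm}$ non-negative integers, and introduce the objects
\[
M^{\pm} \,=\, \bigoplus_{j=0}^{N}\bigl(M^{\otimes j}\bigr)^{\oplus c_j^{\pm}}
\]
of~$\mcC$, which make sense as finite direct sums in a $k$-linear tensor category. Using distributivity of $\otimes$ over $\oplus$, the isomorphism $M^{\otimes a}\otimes M^{\otimes b}\simeq M^{\otimes(a+b)}$ coming from the symmetric monoidal structure, and additivity of~$i$, one obtains
\[
i(M^{\varepsilon}\otimes M^{\eta}) \,=\, \sum_{a,b=0}^{N} c_a^{\varepsilon} c_b^{\eta} \, m_{a+b}, \qquad \varepsilon,\eta\in\{+,-\}.
\]
Expanding $\sum_{a,b} c_a c_b \, m_{a+b}$ with $c_j=c_j^{+}-c_j^{-}$ and collecting the cross-terms (which are equal thanks to the symmetry $m_{a+b}=m_{b+a}$) yields
\[
\sum_{a,b=0}^{N} c_a c_b \, m_{a+b} \,=\, i(M^{+}\otimes M^{+}) + i(M^{-}\otimes M^{-}) - 2\, i(M^{+}\otimes M^{-}),
\]
which is non-negative by the hypothesis~\eqref{eq-pos2} applied to $(M^{+},M^{-})$.

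The main technical point is the appeal to Hamburger's moment theorem: no a priori growth bound on~$(m_n)$ is available, so the resulting measure need not be compactly supported and need not be unique, as the Hamburger problem can be indeterminate. Existence is all the proposition asserts, however, and the total mass $\mu(\Rr)=m_0=i(\un_{\mcC})$ is automatically finite. The additivity hypothesis on~$i$, applied to objects built by iterated direct sums of tensor powers of~$M$, is precisely what lets the positivity condition~\eqref{eq-pos2} produce an ordinary numerical inequality; it should be emphasized that this step uses only the symmetric monoidal structure and is independent of any rigidity or abelian structure one might also wish to impose on~$\mcC$.
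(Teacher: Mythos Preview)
Your proof is correct and follows essentially the same route as the paper: reduce to the Hamburger moment problem, verify the Hankel positivity first for integer coefficients by splitting into positive and negative parts and forming the objects $M^{\pm}$ (the paper's $P$ and $N$), apply the hypothesis~\eqref{eq-pos2}, and pass to real coefficients by homogeneity and continuity. The additional remarks on non-uniqueness and on which categorical hypotheses are actually used are accurate and do not depart from the paper's argument.
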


\begin{proof}
  By the solution of the Hamburger moment problem (see,
  e.g.,~\cite[Th.\,3.8]{schmudgen}), a sequence~$(\mu_a)_{a\geq 0}$ of
  real numbers is the sequence of moments of a positive Borel measure
  $\mu$ on~$\Rr$ if and only if the inequality
  \begin{equation}\label{eq-ham}
    \sum_{1\leq a,b\leq A} \alpha_a \alpha_b\mu_{a+b}\geq 0
  \end{equation}
  holds for all integers $A\geq 1$ and all real
  numbers~$\alpha_a$. Therefore,~$M$ admits a real spectral measure relative
  to~$i$ if and only if the values $\mu_a=i(M^{\otimes a})$ satisfy this
  condition.

  We first consider the case where $\alpha_a$ are integers. Setting 
 \[
  P=\bigoplus_{\alpha_a\geq 0}\alpha_aM^{\otimes a} \quad\quad \text{and} \quad\quad
  N=\bigoplus_{\alpha_a\leq -1}(-\alpha_a)M^{\otimes a}, 
 \] we then get 
 \[
   \sum_{1\leq a,b\leq A} \alpha_a \alpha_b\mu_{a+b} =
   i(P\otimes P)+i(N\otimes N)
   -2i(P\otimes N), 
 \]
 and hence the assumption~(\ref{eq-pos2}) implies the inequality 
 \[
 \sum_{1\leq a,b\leq A} \alpha_a \alpha_b\mu_{a+b}\geq 0. 
\]
This extends to~$\Qq$ by homogeneity, and to~$\Rr$ by
continuity. 
\end{proof}

If not all objects are self-dual (as it often happens), then the situation is
more subtle, because the moment problem on~$\Cc$ is more challenging
than that on~$\Rr$: the analogue of the positivity condition above is not
sufficient to ensure the existence of a positive measure on~$\Cc$ with
given moments. However, under an extra growth condition, one obtains
both existence and uniqueness of the spectral measure. 

\begin{proposition}[Spectral measures for general objects]\label{prop:existencegeneralobjects}
  Let~$\mcC$ be a tensor category.  Let~$i$ be a $\Cc$-valued additive
  invariant of~$\mcC$. Suppose that the inequality
  \begin{equation}\label{eq-pos3}
    i(M\otimes \dual(N))+i(\dual(M)\otimes N)\leq 
    i(M\otimes \dual(M))+i(N\otimes \dual(N))
  \end{equation}
  holds for all objects~$M$ and~$N$ of~$\mcC$.  Let~$M$ be an object
  of~$\mcC$ satisfying the \emph{Carleman condition}
  \begin{equation}\label{eq-carleman}
    \sum_{a\geq 1}i((M\otimes \dual(M))^{\otimes a})^{-1/(2a)}=+\infty.
  \end{equation}Then there exists a unique spectral measure for~$M$ relative to~$i$.
\end{proposition}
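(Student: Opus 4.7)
The plan is to define the candidate moment sequence
\[
\mu_{a,b} = i(M^{\otimes a} \otimes \dual(M)^{\otimes b}), \qquad (a,b)\in \Nn^2,
\]
and to recognise both the existence and uniqueness of a spectral measure as instances of the two-dimensional Hamburger moment problem on $\Rr^2\simeq\Cc$ via $z=x+iy$. Existence will come from Haviland's theorem once the Hermitian positivity of the moment matrix is established, while uniqueness will come from a multivariate Carleman criterion applied to the marginal moments.

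For positivity, I would start with integer-valued families $(\alpha_{a,b})$ and imitate the direct-sum trick from the previous proposition: decompose $\alpha=\alpha^{+}-\alpha^{-}$ and form the honest objects
\[
P=\bigoplus_{\alpha_{a,b}>0}\alpha_{a,b}\, M^{\otimes a}\otimes \dual(M)^{\otimes b}, \qquad
N=\bigoplus_{\alpha_{a,b}<0}(-\alpha_{a,b})\, M^{\otimes a}\otimes \dual(M)^{\otimes b}.
\]
Using the additivity of $\dual$ together with the natural isomorphism $\dual(M^{\otimes a}\otimes\dual(M)^{\otimes b})\simeq M^{\otimes b}\otimes\dual(M)^{\otimes a}$, the quadratic form $\sum_{(a,b),(c,d)} \alpha_{a,b}\alpha_{c,d}\,\mu_{a+d,b+c}$ rewrites as
\[
i(P\otimes\dual(P))+i(N\otimes\dual(N))-i(P\otimes\dual(N))-i(\dual(P)\otimes N),
\]
which is non-negative by hypothesis~\eqref{eq-pos3}. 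Extension to $\Qq$-valued families is by homogeneity, to $\Rr$-valued by continuity, and to $\Cc$-valued by splitting into real and imaginary parts. Haviland's theorem then produces a positive Borel measure $\mu$ on $\Cc$ with the prescribed moments.

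For uniqueness, I would invoke Petersen's multivariate version of Carleman's theorem. Condition~\eqref{eq-carleman} reads precisely $\sum_a \mu_{a,a}^{-1/(2a)}=+\infty$; since the pointwise bounds $x^{2a}\leq |z|^{2a}$ and $y^{2a}\leq |z|^{2a}$ force the marginal moment sequences $\int x^{2a}\,d\mu$ and $\int y^{2a}\,d\mu$ to also satisfy the one-dimensional Carleman condition, the multivariate criterion (see, e.g.,~\cite[Ch.\,14]{schmudgen}) gives determinateness of $\mu$.

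The main obstacle I anticipate is the extension of positivity from integer to complex coefficients. The decomposition $\alpha=\alpha^{+}-\alpha^{-}$ is clean for integer $\alpha$, but in the complex case the cross terms between real and imaginary parts couple $\mu_{a,b}$ with $\mu_{b,a}$, and one must verify that the Hermitian symmetry $\mu_{a,b}=\overline{\mu_{b,a}}$ — automatic for any positive measure on $\Cc$ — can itself be extracted from~\eqref{eq-pos3} applied to well-chosen pairs, rather than being an implicit additional hypothesis on the invariant $i$.
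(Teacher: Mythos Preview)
Your positivity computation is essentially what the paper does (it says only ``as above, \emph{mutatis mutandis} using complexification''), and the concern you flag about Hermitian symmetry $\mu_{a,b}=\overline{\mu_{b,a}}$ is legitimate but secondary. The genuine gap is in the existence step. You write that ``Haviland's theorem then produces a positive Borel measure'' once the moment matrix is positive semidefinite, but Haviland's criterion on $\Rr^2\simeq\Cc$ demands nonnegativity of the moment functional on \emph{all} nonnegative polynomials, not merely on sums of squares; in two or more variables these conditions differ (Motzkin's polynomial and the like). This is exactly why the paper warns, just before the proposition, that ``the analogue of the positivity condition above is not sufficient to ensure the existence of a positive measure on~$\Cc$ with given moments.'' So your plan of using Haviland for existence and a separate multivariate Carleman criterion for uniqueness leaves a hole on the existence side.

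The paper instead invokes a single theorem of Nussbaum (\cite[Th.\,15.11]{schmudgen}): positive semidefiniteness of the moment matrix \emph{together with} the Carleman condition~\eqref{eq-carleman} yields both existence and uniqueness of the representing measure. In other words, the Carleman hypothesis is doing double duty here and cannot be postponed to a uniqueness-only step. Once you replace the appeal to Haviland by an appeal to Nussbaum (or an equivalent statement combining PSD with the Carleman-type growth bound), your argument and the paper's coincide; your marginal-moment estimate $\int x^{2a}\,d\mu\leq\mu_{a,a}$ is then no longer needed, since it presupposed the very measure whose existence was in question.
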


\begin{proof}
  This follows as above (\emph{mutatis mutandis} using complexification)
  from the fact (due to Nussbaum) that the Carleman 
  condition~(\ref{eq-carleman}) combined with the analogue of~(\ref{eq-ham}), is a
  sufficient condition for the existence \emph{and} uniqueness of a
  measure on~$\Cc$ with given moments; see for
  instance~\cite[Th.\,15.11]{schmudgen}.
\end{proof}

\begin{remark}
  The Carleman condition holds in particular if there exist~$c\geq 0$
  and~$r\geq 0$ such that the inequality $i((M\otimes\dual(M))^{\otimes n})\leq cr^n$
  holds for all non-negative integers~$n$. This is a frequent occurrence, but
  it corresponds to measures with compact support (compare with
  Deligne's ``subexponential growth theorem''; see~\cite[Th.\,9.11.4]{etingof-al}).
\end{remark}

\begin{definition}[Positive invariants]
  An additive invariant~$i$ on~$\mcC$ is called a \emph{positive invariant} if
  it satisfies~(\ref{eq-pos3}) for all objects~$M$ and~$N$ of~$\mcC$. 
\end{definition}

The following result gives a usable criterion to check that certain
invariants are positive.

\begin{proposition}\label{pr-general-decomp}
  Let~$\mcC$ be an essentially small tensor category. Let
  $\widehat{\mcC}$ be a set of objects of $\mcC$ such that every object
  of~$\mcC$ is isomorphic to a finite direct sum of objects from
  $\widehat{\mcC}$. We write~$\Zz^{(\widehat{\mcC})}$ for the set of functions $n \colon \widehat{\mcC}\to\Zz$ with finite support and $n_V=n(V)$ for all objects $V$ of $\widehat{\mcC}$. Let~$i$ be an additive invariant of~$\mcC$.  Then
  $i$ is positive if the bilinear form
  \[
  b(n,m)=\sum_{V,W\in\widehat{\mcC}}n_Vm_W\ i(V\otimes\dual(W))
  \]
  on~$\Zz^{(\widehat{\mcC})}$ is positive, i.e.,
  $ b(n,n)\geq 0$ for all functions $n\colon\widehat{\mcC}\to\Zz$ with finite
  support.
\end{proposition}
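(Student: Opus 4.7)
The plan is to exploit the decomposition of objects into direct sums of elements of $\widehat{\mcC}$ to rewrite the four terms in~(\ref{eq-pos3}) as values of~$b$, so that the inequality becomes the positivity of $b$ at the formal difference of two multiplicity functions.

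Concretely, given objects $M$ and~$N$ of $\mcC$, I would fix decompositions $M\simeq\bigoplus_{V\in\widehat{\mcC}}m_V\cdot V$ and $N\simeq\bigoplus_{V\in\widehat{\mcC}}n_V\cdot V$ with $m_V,n_V\in\Nn$, only finitely many of which are non-zero. Since $\otimes$ distributes over $\oplus$, since $\dual$ commutes with direct sums, and since $i$ is additive, one immediately reads off
\[
 i(M\otimes\dual(M))=b(m,m),\quad i(N\otimes\dual(N))=b(n,n),\quad i(M\otimes\dual(N))=b(m,n).
\]
The remaining term requires a small detour: expanding $\dual(M)\otimes N$ produces summands of the form $\dual(V)\otimes W$, whereas $b$ is built from summands $V\otimes\dual(W)$. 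The symmetric monoidal structure supplies a canonical isomorphism $\dual(V)\otimes W\simeq W\otimes\dual(V)$, so by invariance of $i$ and after re-indexing the double sum, one gets $i(\dual(M)\otimes N)=b(n,m)$.

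It then suffices to apply the hypothesis to the integer-valued function with finite support $m-n\in\Zz^{(\widehat{\mcC})}$. Expanding bilinearly,
\[
 0\leq b(m-n,m-n)=b(m,m)-b(m,n)-b(n,m)+b(n,n),
\]
and substituting the four identities above recovers inequality~(\ref{eq-pos3}).

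The only genuinely delicate point I anticipate is the identification $i(\dual(M)\otimes N)=b(n,m)$, which depends on the symmetry of the tensor product through the braiding isomorphism. Once this is in place, the rest of the argument is a formal manipulation of the bilinear form; in particular, it does not matter whether the decompositions of $M$ and $N$ are unique, since additivity of~$i$ guarantees that the values $b(m,m)$, $b(n,n)$, $b(m,n)$ and $b(n,m)$ do not depend on the choice of decomposition.
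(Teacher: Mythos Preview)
Your proof is correct and follows essentially the same route as the paper's: decompose $M$ and $N$, use additivity to express each of the four terms of~(\ref{eq-pos3}) via the bilinear form~$b$, and then recognize the difference as $b(m-n,m-n)\geq 0$. You are in fact more explicit than the paper about the one non-formal step, namely the use of the braiding to identify $i(\dual(V)\otimes W)$ with $i(W\otimes\dual(V))$ so that $i(\dual(M)\otimes N)=b(n,m)$; the paper's proof writes down the expansions and passes directly to $b(m-n,m-n)$ without isolating this point.
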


\begin{proof}
  Let~$M$ and~$N$ be objects of~$\mcC$, and represent them as direct sums
  \[
  M=\bigoplus_{V\in\widehat{\mcC}} m_{V}V,\quad\quad
  N=\bigoplus_{W\in\widehat{\mcC}} n_{W}W
  \]
  with only finitely many non-zero integers $m_V$, $n_W$.  By
  additivity, we obtain the formulas
  \begin{gather*}
    i(M\otimes\dual(N))+i(\dual(M)\otimes N)=
    \sum_{V,W} m_Vn_W\ i(V\otimes\dual(W))+
    \sum_{V,W} m_Vn_W\ i(\dual(V)\otimes W),
    \\
    i(M\otimes\dual(M))+i(N\otimes \dual(N))=
    \sum_{V,W} m_Vm_W\ i(V\otimes\dual(W))+
    \sum_{V,W} n_Vn_W\ i(V\otimes\dual(W))
  \end{gather*}
  so that we get
  \[
  \Bigl(i(M\otimes\dual(M))+i(N\otimes \dual(N)) \Bigr) -\Bigl(
  i(M\otimes\dual(N))+i(\dual(M)\otimes N)\Bigr)= b(m-n,m-n),
  \]
  and the result then follows from Proposition~\ref{prop:existencegeneralobjects}. 
\end{proof}

As a special case, we deduce:

\begin{corollary}\label{cor-general-invariant}
  Let~$k$ be a field.  Let $\mcC$ be any essentially small $k$-linear
  semisimple tensor category with unit object~$1_{\mcC}$ in which the $\Hom$ spaces are finite-dimensional. The formula
  \[
  i(M)=\dim_k \Hom(1_{\mcC},M)
  \] defines a positive invariant on~$\mcC$.
\end{corollary}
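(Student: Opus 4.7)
The plan is to apply Proposition~\ref{pr-general-decomp}. Since $\mcC$ is essentially small and semisimple, I take $\widehat{\mcC}$ to be a set of representatives of the isomorphism classes of simple objects; by semisimplicity, every object of $\mcC$ is isomorphic to a finite direct sum of elements of $\widehat{\mcC}$. The additivity of the invariant $i$ is immediate, since $\Hom(1_{\mcC}, -)$ sends direct sums in $\mcC$ to direct sums of $k$-vector spaces. It therefore suffices to check the positivity of the bilinear form
\[
b(n,m) = \sum_{V, W \in \widehat{\mcC}} n_V m_W \, i(V \otimes \dual(W)).
\]

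The key input is the rigidity of $\mcC$, which is built into Deligne's notion of a tensor category and provides a natural isomorphism of $k$-vector spaces
\[
\Hom(1_{\mcC}, V \otimes \dual(W)) \cong \Hom(W, V)
\]
for any objects $V$ and $W$. When $V$ and $W$ are simple, Schur's lemma (applied in the $k$-linear semisimple setting with finite-dimensional $\Hom$ spaces) implies that the right-hand side vanishes unless $V \cong W$, and equals $\End_{\mcC}(V)$ in the diagonal case. Setting $d_V = \dim_k \End_{\mcC}(V) \geq 1$, we thus obtain
\[
i(V \otimes \dual(W)) = \delta_{V,W} \, d_V,
\]
so that the matrix of $b$ indexed by $\widehat{\mcC}$ is diagonal with non-negative entries.

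It follows that for every finitely supported function $n \colon \widehat{\mcC} \to \Zz$ one has
\[
b(n,n) = \sum_{V \in \widehat{\mcC}} n_V^2 \, d_V \geq 0,
\]
so Proposition~\ref{pr-general-decomp} applies and yields the positivity of $i$. The only subtle step is the rigidity-based identification of $\Hom(1_{\mcC}, V \otimes \dual(W))$ with $\Hom(W, V)$; once that is granted, the conclusion reduces to the elementary fact that a diagonal matrix with non-negative entries is positive semi-definite.
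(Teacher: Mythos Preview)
Your proof is correct and follows essentially the same approach as the paper: apply Proposition~\ref{pr-general-decomp} with $\widehat{\mcC}$ a set of representatives of simple objects, and observe that the bilinear form $b$ is diagonal with non-negative entries. You supply a bit more detail (the rigidity isomorphism $\Hom(1_{\mcC},V\otimes\dual(W))\cong\Hom(W,V)$ and Schur's lemma) where the paper simply asserts $i(V\otimes\dual(W))=0$ for $V\neq W$, but the argument is the same.
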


\begin{proof}
  We apply Proposition~\ref{pr-general-decomp} to the set $\widehat{\mcC}$ of
  isomorphism classes of simple objects of~$\mcC$. Then
  $i(V\otimes \dual(W))=0$ for $V$ and~$W$ in~$\widehat{\mcC}$, unless
  $V$ is equal to~$W$, so that the bilinear form~$b$ in the statement is
  diagonal in the canonical basis of $\Zz^{(\widehat{\mcC})}$, with
  diagonal coefficients equal to 
  $i(V\otimes \dual(V))=\dim_k \Hom(1_{\mcC},V\otimes\dual(V)) \geq 0$.
\end{proof}

\begin{remark}
  (1) The remainder of this paper will concentrate on the invariant of
  Corollary~\ref{cor-general-invariant}. However, there are other
  natural potential invariants that may be considered. One which seems
  quite interesting is the \emph{length} of an object in a semisimple category
  (where all objects have finite length). In the simplest case of the
  category of finite-dimensional complex representations of a finite
  group~$G$, it is an elementary exercise that the length is a positive invariant
  (and every object has a unique spectral measure relative to the
  length) if and only if the sum (without multiplicity) of the
  irreducible characters of~$G$ is non-negative. This holds for instance
  for all symmetric groups, but not all alternating groups (the latter
  experimentally). We hope to come back to this example in a later
  paper.
  
  (2) We emphasize that it is essential to impose the positivity of the spectral measure in Definition~\ref{def-spectral}: it is known by independent work of Boas and Pólya (see, e.g.,~\cite{boas}) that \emph{any} sequence of
  complex numbers is the sequence of moments of infinitely many complex
  measures on~$\Rr$.

  (3) In general, spectral measures are not uniquely determined given
  the object of interest, and only their moments are unambiguously known
  (see the conclusion of Remark~\ref{non-uniqueness} for a simple example
  where the spectral measure is not unique). 
\end{remark}

We conclude this section with a simple observation.

\begin{proposition}\label{pr-image-measures}
  Let~$k$ be a field.  Let $\mcC$ be a $k$-linear tensor category with
  unit object~$1_{\mcC}$ and let~$i$ be a positive invariant on~$\mcC$.
  Let~$M$ be an object of~$\mcC$. Let~$\mu$ be a spectral measure
  for~$M$ relative to~$i$.

  \begin{enumth}
  \item For any non-negative integers $m$ and~$n$, the image measure
    $(z\mapsto z^m\bar{z}^n)_*\mu$ is a spectral measure
    for~$M^{\otimes a}\otimes \dual(M)^{\otimes b}$ relative to~$i$.
  \item The image measure $(z\mapsto 2\Reel(z))_*\mu$ is a spectral
    measure for~$M\oplus \dual(M)$ relative to~$i$.
  \end{enumth}
\end{proposition}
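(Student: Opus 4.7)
The plan is to verify both claims by a direct computation of moments: in each case I compute the $(a,b)$-moment of the pushforward measure using the standard change of variables formula, identify it with the corresponding value of $i$ via the spectral-measure property of $\mu$, and then recognize the result as $i$ of the object stated in the conclusion using the basic symmetries of a tensor category with duality, namely $\dual(A\otimes B)\simeq \dual(A)\otimes\dual(B)$ (via the symmetry isomorphism), the isomorphism $\dual(\dual(M))\simeq M$, and the additivity of~$i$. Note that positivity of the pushforward measures is automatic, since the pushforward of a positive measure is positive.

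For part (1), writing $\nu=(z\mapsto z^m\bar z^n)_*\mu$ and using the identity $\overline{z^m\bar z^n}=\bar z^m z^n$, the change of variables formula gives
\[
\int_\Cc w^a \bar w^b\, d\nu(w) = \int_\Cc z^{ma+nb}\,\bar z^{na+mb}\, d\mu(z) = i\bigl(M^{\otimes (ma+nb)}\otimes \dual(M)^{\otimes (na+mb)}\bigr).
\]
Using symmetry and $\dual^2\simeq\id$, one rewrites $(M^{\otimes m}\otimes \dual(M)^{\otimes n})^{\otimes a} \otimes \dual(M^{\otimes m}\otimes \dual(M)^{\otimes n})^{\otimes b} \simeq M^{\otimes (ma+nb)}\otimes \dual(M)^{\otimes (na+mb)}$, so $\nu$ has the required moments.

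For part (2), setting $\rho=(z\mapsto 2\Reel(z))_*\mu$ and using $2\Reel(z)=z+\bar z\in\Rr$, the $(a,b)$-moment reads
\[
\int_\Cc w^a \bar w^b\, d\rho(w) = \int_\Cc (z+\bar z)^{a+b}\, d\mu(z) = \sum_{k=0}^{a+b}\binom{a+b}{k}\, i\bigl(M^{\otimes k}\otimes \dual(M)^{\otimes(a+b-k)}\bigr),
\]
where I expanded binomially and used linearity of integration. Additivity of~$i$ together with the binomial identity $(M\oplus\dual(M))^{\otimes(a+b)}\simeq \bigoplus_{k=0}^{a+b}\binom{a+b}{k} M^{\otimes k}\otimes\dual(M)^{\otimes(a+b-k)}$ in the symmetric monoidal category rewrites this as $i((M\oplus\dual(M))^{\otimes(a+b)})$. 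Finally, since $\dual(M\oplus\dual(M))\simeq\dual(M)\oplus M\simeq M\oplus\dual(M)$, this equals $i((M\oplus\dual(M))^{\otimes a}\otimes \dual(M\oplus\dual(M))^{\otimes b})$, as required.

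There is no real obstacle here: the argument is essentially bookkeeping, and the main point to be careful with is simply making the categorical identifications explicit (commuting tensor factors using the symmetric braiding, and applying $\dual^2\simeq\id$) so that the moments match on the nose. Positivity of the pushforward measures comes for free, so no version of the moment-problem machinery from Proposition~\ref{prop:existencegeneralobjects} is needed.
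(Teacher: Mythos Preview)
Your proof is correct and follows essentially the same approach as the paper: a direct moment computation using the change-of-variables formula together with the binomial expansion and the standard tensor-category identifications. The paper only writes out part~(2) explicitly (reducing to a single index via self-duality of $M\oplus\dual(M)$) and declares part~(1) similar, whereas you spell out both, but the substance is identical.
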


\begin{proof}
  We prove the second statement, the first being similar. The object
  $N=M\oplus \dual(M)$ is self-dual, so it suffices to consider
  $i(N^{\otimes a})$ for all integers~$a\geq 0$. From the isomorphism
  \[
  N^{\otimes a}\simeq \bigoplus_{0\leq b\leq a}
  \binom{a}{b} M^{\otimes b}\otimes \dual(M)^{\otimes (a-b)},
  \]
  and the definition of spectral measures, we get the equality
  \[
  i( N^{\otimes a})=\sum_{0\leq b\leq a} \binom{a}{b}
  \int_{\Cc}z^b\bar{z}^{b-a}d\mu(z)=\int_{\Cc} (z+\bar{z})^ad\mu(z),
  \] which means that $(z\mapsto 2\Reel(z))_*\mu$ is a spectral
    measure for~$N$. 
\end{proof}

\begin{remark}
  With obvious conventions, this proposition can be phrased and
  generalized as follows: for any polynomial
  $Q \in\Nn[z,\bar{z}]$, the measure $Q_*\mu$ is a spectral measure for
  the object $Q(M,\dual(M))$.

  More generally, one can raise the following natural question, for
  which we do not have good answers at the moment: given an object~$M$
  and some spectral measure $\mu(M)$ for~$M$ relative to some
  invariant~$i$, is there a ``natural'' definition of spectral measures
  $\mu(N)$, for all objects~$N$ of the tensor category generated by~$M$,
  such that $\mu(N)$ coincides with the measure~$Q_*\mu_M$ when $N=Q(M,D(M))$ as
  above?  Already when considering simple examples of Schur functors,
  such as symmetric powers (when they are defined), the answer is not
  clear. 
\end{remark}

\section{Tensor envelopes and fixed-point statistics}

Let $k$ be a field of characteristic zero and $t \in k$ an element. Deligne~\cite[Th.\,2.18]{deligne} defined a rigid $k$\nobreakdash-linear pseudo-abelian symmetric monoidal category $\Rep(S_t, k)$ by generators and relations, relying on some stability properties of representations of symmetric groups. If~$t$ is not a non\nobreakdash-negative integer $n \geq 0$, then $\Rep(S_t, k)$ is abelian and semisimple. If $t=n$, then the semisimplication of $\Rep(S_t, k)$ is equivalent to the category $\mathrm{Rep}(S_n)$ of $k$-linear representations of the symmetric group~$S_n$. We will mainly deal with the case where $k=\Cc(t)$ and $t$ is the indeterminate of $k$, which we simply denote by $\Rep(S_t)$. 

Knop~\cite{knop} discovered an alternative
approach to constructing new rigid symmetric monoidal categories which
is \emph{a priori} independent of ideas of interpolating other
categories; this leads to many more examples, and happens to recover in
a special case the categories of Deligne. The input data in Knop's construction is a base category $\mathcal{A}$ satisfying some regularity conditions, a field~$k$ and a degree function
$\delta$ which associates to every surjective morphism~$e$
in~$\mathcal{A}$ an element~$\delta(e)$ of~$k$, again subject to some
conditions. The resulting category is denoted
$\mathcal{T}(\mathcal{A},\delta)$ by Knop, and is called the
\emph{tensor envelope} of~$\mathcal{A}$ with respect to~$\delta$. For
the moment, it is sufficient for us to recall that every object~$x$
of~$\mathcal{A}$ defines an object $[x]$
of~$\mathcal{T}(\mathcal{A},\delta)$, which is always self-dual, and
that the $k$-linear space of morphisms from~$[x]$ to~$[y]$ admits as a
basis the set of all \emph{relations} from~$x$ to~$y$, i.e., the set of
all subobjects of the product~$x\times y$. To give some context, we spell out in Appendix~\ref{appendix} the construction of~$\mathcal{T}(\mathcal{A},\delta)$ in the special case relevant to
Theorem~\ref{th-fix}, namely when~$\mathcal{A}$ is the \emph{opposite} 
of the category of finite sets.

\subsection{Proof of Theorem~\ref{th-fix}} Let $P_1$ denote the Poisson distribution with parameter $1$. By the so-called \emph{Dobiński's formula} (see, e.g.,~\cite{pitman}), for each integer $k \geq 0$, the $k$-th moment
\[
\expect(P_1^k)=\frac{1}{e}\sum_{r=0}^\infty \frac{r^k}{r!}
\] agrees with the $k$-th Bell number, \ie the number of partitions of a set with $k$ elements (indeed, both sequences satisfy $a_0=0$ and the recurrence relation $a_{k+1}=\sum_{r=0}^k {k \choose r} a_r$). In particular, $\expect(P_1^k) \leq k^k$, so that the Carleman condition holds and $P_1$ is determined by its moments (as is well-known). Thanks to the method of moments (see, e.g.,~\cite[Th.\,30.2]{billingsley-basic}), to prove the convergence in law $|\Fix(X_n)| \to P_1$ as $n \to +\infty$, it suffices to prove that, for each integer~$k\geq 0$, the sequence of moments $(\expect(|\Fix(X_n)|^k))_{n \geq 1}$ converges to $\expect(P_1^k)$. 
  
 We first observe the equality~$|\Fix(X_n)|=\chi_n(X_n)$, where
  $\chi_n$ is the character of the ``standard'' permutation
  representation~$\Std_n$ of~$S_n$ acting on~$\Cc^n$. By basic representation
  theory of finite groups, we then get the expression 
  \begin{equation}\label{eqn:expectfix}
  \expect(|\Fix(X_n)|^k)= \frac{1}{n!}\sum_{\sigma \in
    S_n}\chi_n(\sigma)^k =\dim_{\Cc}
  \Hom_{\mathrm{Rep}(S_n)}(1_n,\Std_n^{\otimes k}),
  \end{equation}
  where~$1_n$ is the trivial one-dimensional representation of~$S_n$.
  
  We now appeal to the Deligne--Knop category~$\mcC_t=\Rep(S_t)$, first in the situation where~$t$ is the indeterminate in
  the field~$\Cc(t)$. Before pursuing the proof, we summarize the properties of $\mcC_t$ that will be useful for us:
  \begin{itemize}
  \item[(a)] Each finite set $X$ defines a self-dual object $[X]$ of $\mcC_t$, and these objects satisfy 
  \[
  \Hom_{\mcC_t}([X], [Y])=\Cc(t)\langle \text{partitions of $X \dunion Y$}\rangle. 
  \]
   \item[(b)] The tensor product of $[X]$ and $[Y]$ is the object $[X] \otimes [Y]=[X \dunion Y]$. 
   \item[(c)] The category $\mcC_t$ is a semisimple $\Cc(t)$-linear tensor category.
  \end{itemize} In particular, $\mcC_t$ contains objects $1_t=[\emptyset]$ and $\Std_t=[\{1\}]$, the first being the unit object for the tensor product. By Corollary~\ref{cor-general-invariant}, the assignment  
 \[
 i(M)=\dim_{\Cc(t)} \Hom_{\mcC_t}(1_t,M)
 \] defines a positive invariant on~$\mcC_t$.  
  
  \begin{lemma}\label{lm-standard} The object $\Std_t$ admits a unique spectral measure with respect to $i$, and this measure is equal to the Poisson distribution $P_1$. In particular,
  \begin{equation}\label{expectP1}
  \expect(P_1^k)=\dim_{\Cc(t)} \Hom_{\mcC_t}(1_t, \Std_t^{\otimes k}). 
  \end{equation}
  \end{lemma}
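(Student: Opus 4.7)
The plan is to compute the categorical moments $i(\Std_t^{\otimes k})$ explicitly and to identify them with the moments of the Poisson distribution $P_1$ via Dobi\'nski's formula.

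First, I would combine properties~(a) and~(b) of $\mcC_t$. By property~(b), $\Std_t^{\otimes k} = [\{1\}]^{\otimes k} \simeq [X_k]$ for any $k$-element set $X_k$. Since $1_t = [\emptyset]$, applying property~(a) then gives
\[
\Hom_{\mcC_t}(1_t, \Std_t^{\otimes k}) = \Cc(t)\langle \text{partitions of } X_k\rangle,
\]
whose dimension over $\Cc(t)$ equals the $k$-th Bell number $B_k$. Hence $i(\Std_t^{\otimes k}) = B_k$ for every $k\geq 0$.

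Next, because $\Std_t$ is self-dual, one has $\Std_t^{\otimes a}\otimes \dual(\Std_t)^{\otimes b} \simeq \Std_t^{\otimes(a+b)}$, so the spectral-measure condition for $\Std_t$ reduces to showing that $\int_{\Cc} z^a \bar z^b\, d\mu(z) = B_{a+b}$ for all $a,b\geq 0$. For $\mu = P_1$ the left-hand side equals $\expect(P_1^{a+b})$, because $P_1$ is supported on $\Zz_{\geq 0}\subset \Rr$, and this agrees with $B_{a+b}$ by Dobi\'nski's formula recalled just before the lemma. This would prove both that $P_1$ is a spectral measure of $\Std_t$ relative to $i$, and the identity~\eqref{expectP1}.

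For uniqueness, I would invoke Proposition~\ref{prop:existencegeneralobjects}: the invariant $i$ is positive by Corollary~\ref{cor-general-invariant}, so it suffices to verify the Carleman condition for $\Std_t$. Using self-duality one more time, this reduces to
\[
\sum_{a\geq 1} B_{2a}^{-1/(2a)} = +\infty,
\]
which follows from the bound $B_k\leq k^k$ already recalled: indeed $B_{2a}^{-1/(2a)}\geq (2a)^{-1}$, and the harmonic series diverges. There is no substantial obstacle in this argument; the only non-trivial input is the combinatorial identification of the $\Hom$ space with partitions, which is immediate from the defining properties of $\mcC_t$, and everything else is matching notation with Dobi\'nski's formula.
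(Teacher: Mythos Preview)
Your proof is correct and follows essentially the same approach as the paper: compute $i(\Std_t^{\otimes k})$ as the Bell number $B_k$ via properties~(a) and~(b), then match with the moments of $P_1$ through Dobi\'nski's formula. The paper's proof is terser and leaves uniqueness to the remark preceding the lemma (that $P_1$ is determined by its moments since $\expect(P_1^k)\leq k^k$), whereas you spell out the Carleman verification explicitly via Proposition~\ref{prop:existencegeneralobjects}; this is the same argument made explicit.
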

  
  \begin{proof} Recall that the object $\Std_t$ is self-dual. For each integer $k \geq 0$, the $k$-th tensor product $\Std_t^{\otimes k}$ is the object $[\{1, \ldots, k\}]$ of $\mcC_t$. Hence, $i(\Std_t^{\otimes k})=\dim_{\Cc(t)} \Hom_{\mcC_t}(1_t, \Std_t^{\otimes k})$ is the number of partitions of the set $\{1, \ldots, k\}$, which is also the $k$-th moment of $P_1$. 
  \end{proof}
  
  Combining \eqref{eqn:expectfix} and \eqref{expectP1}, the proof of Theorem~\ref{th-fix} then reduces to showing the equality 
  \[
  \lim_{n \to +\infty} \dim_{\Cc} \Hom_{\mathrm{Rep}(S_n)}(1_n,\Std_n^{\otimes k})=\dim_{\Cc(t)} \Hom_{\Rep(S_t)}(1_t, \Std_t^{\otimes k}). 
  \] For this, we use the variant $\mathcal{C}_z=\Rep(S_z, \Cc)$ of the Deligne--Knop category obtained by ``specializing'' the indeterminate $t$ to some fixed complex number $z$. Properties (a) and (b) from above still hold, and in particular 
  \[
  \dim_{\Cc(t)} \Hom_{\Rep(S_t)}(1_t, \Std_t^{\otimes k})=\dim_{\Cc} \Hom_{\mathcal{C}_z}(1_z, \Std_z^{\otimes k})
  \] since a basis of both vector spaces is given by the partitions of $\{1, \dots, k\}$. Unless $z$ is an integer $n \geq 0$, the category $\mathcal{C}_z$ is still semisimple. 
  
For integer values $z=n$, the semisimplication of $\mathcal{C}_n$ is equivalent, as a tensor category, to the category of finite-dimensional complex representations of $S_n$, an equivalence being given by a functor that maps an object of the form $[X]$ to the permutation representation on the space $V_X$ of functions $X \to \Cc^n$ (see~\cite[Th.\,9.8,\,Example\,1,\,p.\,606]{knop}). In particular, such a functor sends the object $[\emptyset]$ to the trivial one-dimensional representation $1_n$, and the object $[\{1\}]$ to the standard permutation representation $\Std_n$ on~$\Cc^n$. The semisimplification of $\mathcal{C}_n$ is the quotient category $\overline{\mathcal{C}}_n=\mathcal{C}_n/\mathcal{N}_n$, where $\mathcal{N}_n$ denotes the tensor radical of $\mathcal{C}_n$ (see~\cite[\S\,4.1]{knop}). It is a semisimple abelian tensor category by~\cite[Th.\,6.1]{knop}.
   
  \begin{lemma}\label{lm-dk-specialization}
  Let $n \geq 1$ be an integer. For each integer $k\geq 0$, the inequality 
  \[
  \dim_{\Cc} \Hom_{\mathrm{Rep}(S_n)}(1_n,\Std_n^{\otimes k})\leq
  \dim_{\Cc} \Hom_{\mathcal{C}_n}(1_t,\Std_n^{\otimes k})
  \]
  holds, with equality if and only if $n\geq k$.
\end{lemma}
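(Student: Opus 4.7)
The plan is to identify both dimensions as cardinalities of explicit families of partitions, and then compare them. For the right-hand side, property~(a) from the discussion of $\mathcal{C}_t$ persists upon specializing the parameter to the integer $n$ (the combinatorial basis of $\Hom$ spaces is built into Knop's construction of the tensor envelope, independently of semisimplicity of the category). In particular,
\[
\Hom_{\mathcal{C}_n}(1_t,\Std_n^{\otimes k})=\Hom_{\mathcal{C}_n}([\emptyset],[\{1,\ldots,k\}])
\]
admits as basis the set of partitions of $\emptyset \dunion \{1,\ldots,k\}=\{1,\ldots,k\}$, and thus has dimension equal to the Bell number $B_k$.

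For the left-hand side, I would first recall that $\Std_n^{\otimes k}$ is the permutation representation of $S_n$ on the set $\{1,\ldots,n\}^{k}$ of functions $f\colon\{1,\ldots,k\}\to\{1,\ldots,n\}$. Since the dimension of $\Hom_{\Rep(S_n)}(1_n,V)$ for a permutation representation $V$ equals the number of $S_n$-orbits on the indexing set, the computation reduces to counting orbits. Two functions $f,g\colon\{1,\ldots,k\}\to\{1,\ldots,n\}$ lie in the same $S_n$-orbit iff $g=\sigma\circ f$ for some $\sigma\in S_n$, which happens iff they induce the same partition of $\{1,\ldots,k\}$ into fibers; moreover, a partition of $\{1,\ldots,k\}$ arises in this way iff its number of parts is at most~$n$. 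The left-hand side therefore equals the number of partitions of $\{1,\ldots,k\}$ with at most $n$ parts.

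The inequality is then immediate, since partitions of $\{1,\ldots,k\}$ with at most $n$ parts form a subset of all partitions. Equality holds iff every partition of $\{1,\ldots,k\}$ has at most $n$ parts; this is the case as soon as $n\geq k$, and conversely, when $n<k$, the discrete partition into singletons has exactly $k>n$ parts and so is counted on the right but not on the left. There is no serious obstacle in this argument; the only point demanding some care is to keep clearly distinct the category $\mathcal{C}_n$ (whose $\Hom$ spaces still have the full combinatorial basis from Knop's construction, so that the Bell number appears) and its semisimplification $\overline{\mathcal{C}}_n\simeq\Rep(S_n)$, where morphisms are identified modulo the tensor radical and only partitions with at most $n$ parts survive.
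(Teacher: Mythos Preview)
Your proof is correct, but it takes a genuinely different route from the paper. You compute both sides explicitly as cardinalities of sets of partitions (all partitions of $\{1,\ldots,k\}$ on the right, those with at most $n$ parts on the left via orbit counting on $\{1,\ldots,n\}^k$), and then compare directly. The paper instead argues structurally: since $\mathrm{Rep}(S_n)$ is the semisimplification $\overline{\mcC}_n=\mcC_n/\mcN_n$, the left-hand side is the dimension of a quotient of the right-hand side by $\mcN_n(1_n,\Std_n^{\otimes k})$, which yields the inequality without computing either dimension; equality is then handled by invoking Knop's criterion (Cor.\,8.5 in~\cite{knop}) for the vanishing of the radical in terms of the numerical invariants~$\omega_e$.

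Your approach is more elementary and self-contained for this specific lemma, and gives both directions of the ``if and only if'' cleanly via the discrete partition. The paper's approach, on the other hand, applies verbatim to arbitrary pairs of basic objects $[X],[Y]$ and to the other tensor envelopes considered later (finite vector spaces, affine spaces), where an explicit orbit count on the representation side would be less transparent; it also foreshadows the use of Knop's radical criterion in the proof of Theorem~\ref{th-fi}.
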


\begin{proof} Let $X$ and $Y$ be finite sets. By definition of the quotient category, the objects of $\overline{\mathcal{C}}_n$ are the same as those of $\mathcal{C}_n$, and the morphisms between the representations $V_X$ and $V_Y$ of~$S_n$ corresponding to $[X]$ and $[Y]$ via the equivalence of categories  are given by 
\[
 \Hom_{\mathrm{Rep}(S_n)}(V_X, V_Y)= \Hom_{\overline{\mcC}_n}([X],[Y])=
  \Hom_{\mcC_n}([X],[Y])/\mcN_n([X],[Y]).
\] Therefore, we obtain an inequality 
\[
\dim_{\Cc} \Hom_{\mathrm{Rep}(S_n)}(V_X, V_Y) \leq \dim_{\Cc} \Hom_{\mathcal{C}_n}([X],[Y]), 
\] with equality if and only if $\mcN_n([X],[Y])$ is reduced to the zero morphism. Taking $X=\emptyset$ and~$Y=\{1, \ldots, k\}$, so that $V_X=1_n$ and $V_Y=\Std_n^{\otimes k}$, proves the first part of the statement. 

It remains to see when $\mcN_n(1_n, \Std_n^{\otimes k})$ is zero.  By a
  result of Knop~\cite[Cor.\,8.5]{knop}, this holds if and only if certain
  invariants $\omega_e$ in~$\Cc$ are non-zero for all indecomposable
  surjective morphisms $e \colon u\to v$ in the category
  $\mathsf{Set}^{opp}$ such that $u$ is a subquotient of
  $1_t\otimes \Std_t^{\otimes k}=\Std_t^{\otimes
    k}$. By~\cite[Ex.\,1,\,p.\,596]{knop}, this invariant is equal to $\omega_e=n-|v|$ for such morphisms; since indecomposable surjective morphisms
  $u\to v$ in $\mathsf{Set}^{opp}$ are injective maps of sets
  $v\injecte u$ satisfying $|v|=|u|-1$, and $u$ is a subquotient of
  $\Std_n^{\otimes k}=[\{1,\ldots, k\}]$, we have $|v|\leq k-1$, and hence
  $\omega_e=n-|v|\geq 1$ is non-zero for all $n \geq k$.
\end{proof}

This concludes the proof of Theorem~\ref{th-fix}. 

\begin{remark}\label{remarkotherasympt}
  (1) In comparison with other proofs, this abstract argument has the
  advantage of explaining, to some extent, where the Poisson
  distribution comes from.
  
  (2) It is natural to ask if similar ideas can be used to reprove other statements in the theory of random permutations, such as the fact that the sequence $(\ell_i(X_n))_{n \geq 1}$, where $\ell_i(\sigma)$ denotes the number of $i$-cycles in the decomposition of a permutation $\sigma$, converges in law to the Poisson distribution $P_{1/i}$. More ambitiously, one can try to count the number of cycles in a random permutation. 
  
  (3) To the best of our knowledge, the fact that the first moments of
  $|\Fix(X_n)|$ coincide with those of the Poisson distribution first appears in the work of Diaconis--Shashahani~\cite[Th.\,7]{d-s}.
\end{remark}  

\subsection{Fixed-point statistics for vector and affine spaces over finite fields} Knop's approach yields many more instances of tensor categories, and
the principles above are then applicable. As an example, we recover a
result of Fulman (proved in his $1997$ unpublished thesis) which appears
in a paper of Fulman and Stanton~\cite[Th.\,4.1]{fulman-stanton}.

\begin{proposition}[Fulman]\label{pr-fulman}
  Let~$E$ be a finite field and let $(X_n)_{n \geq 1}$ be a sequence of random
  variables with $X_n$ uniformly distributed in $\GL_n(E)$. The sequence
  $(|\Fix(X_n)|)_{n\geq 1}$, where~$\Fix(g)$ is the $1$-eigenspace
  of~$g\in\GL_n(E)$, converges in law as $n\to +\infty$. For $k\geq 0$,
  the $k$-th moment of the limiting distribution is equal to the number
  of vector subspaces of $E^k$. Moreover, the $k$-th moment of $|\Fix(X_n)|$ is equal to the limiting
  moment for $n\geq k$.
\end{proposition}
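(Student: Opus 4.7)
The plan is to mimic the proof of Theorem~\ref{th-fix}, replacing the Deligne--Knop category $\Rep(S_t)$ by Knop's tensor envelope $\mathcal{D}_t=\mathcal{T}(\mathsf{Vec}_E^{opp},\delta)$ of the opposite of the category of finite-dimensional $E$-vector spaces, with the specialization parameter $t$ playing the role of $q^n$, where $q=|E|$. In this category, each finite-dimensional $E$-vector space $X$ gives rise to a self-dual object $[X]$; the tensor product satisfies $[X]\otimes[Y]=[X\oplus Y]$; and $\Hom_{\mathcal{D}_t}([X],[Y])$ has a basis indexed by the subspaces of $X\oplus Y$, which play in the vector-space setting the role of partitions of $X\dunion Y$ in the set-theoretic one. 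By Knop's general theory, the semisimplification of the specialization at $t\mapsto q^n$ is equivalent, as a tensor category, to the category of complex representations of $\GL_n(E)$, the object $[X]$ corresponding to the permutation representation $V_X^{(n)}=\Cc[\Hom_E(X,E^n)]$. In particular, $\Std_t:=[E]$ specializes to $V_n=\Cc[E^n]$, whose character at $g\in\GL_n(E)$ is precisely $|\Fix(g)|$.

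Next I would reinterpret the moments categorically. As in~\eqref{eqn:expectfix},
\[
  \expect(|\Fix(X_n)|^k)=\dim_{\Cc}\Hom_{\GL_n(E)}(1,V_n^{\otimes k}),
\]
and this dimension counts the $\GL_n(E)$-orbits on $(E^n)^k\cong\Hom_E(E^k,E^n)$. Two such maps are in the same orbit if and only if they have the same kernel, so the orbits correspond to the subspaces of $E^k$ of codimension at most $n$, hence to \emph{all} subspaces of $E^k$ as soon as $n\geq k$. This already proves the last sentence of the proposition. On the categorical side, $i(M)=\dim_{\Cc(t)}\Hom(1_t,M)$ is a positive invariant on~$\mathcal{D}_t$ by Corollary~\ref{cor-general-invariant}, and $i(\Std_t^{\otimes k})=\dim_{\Cc(t)}\Hom([\emptyset],[E^k])$ equals, by construction, the number of subspaces of $E^k$.

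Then I would argue convergence of the moments by exactly the analogue of Lemma~\ref{lm-dk-specialization}. Knop's vanishing criterion~\cite[Cor.\,8.5]{knop} reduces this to the non-vanishing, for $n\geq k$, of the invariants $\omega_e$ attached to the indecomposable surjections $u\to v$ in $\mathsf{Vec}_E^{opp}$ whose source $u$ appears as a subquotient of $\Std_t^{\otimes k}=[E^k]$. The indecomposable surjections in $\mathsf{Vec}_E^{opp}$ are the codimension-one inclusions $v\hookrightarrow u$ of $E$-vector spaces, and by the vector-space analogue of~\cite[Ex.\,1,\,p.\,596]{knop} the associated invariant specializes at $t=q^n$ to $\omega_e=q^n-q^{\dim v}$. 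Since $\dim u\leq k$ forces $\dim v=\dim u-1\leq k-1$, this invariant is non-zero whenever $n\geq k$, giving the desired stabilization of the moment at the number of subspaces of $E^k$.

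The main obstacle is to upgrade the convergence of moments to convergence in law, because the Carleman condition~\eqref{eq-carleman} fails here: the number of subspaces of $E^k$ grows like $q^{k^2/4}$, so the series $\sum_k i(\Std_t^{\otimes k})^{-1/(2k)}$ converges rather than diverges, and Proposition~\ref{prop:existencegeneralobjects} does not directly apply. I would bypass this by exploiting the discrete nature of the random variables: each $|\Fix(X_n)|$ takes values in $\{q^j : 0\leq j\leq n\}$, and the stabilized moment bound yields tightness; any subsequential limit is then supported on $\{q^j : j\geq 0\}$ with the prescribed moments, and uniqueness reduces to a discrete moment problem on a geometric sequence, which admits a unique solution by inverting the Vandermonde-like system (recovering the probabilities $p_j=\lim_n\proba(|\Fix(X_n)|=q^j)$ from the moments via the Möbius function of the subspace lattice, in the spirit of the explicit formula of Fulman--Stanton~\cite[Th.\,4.1]{fulman-stanton}).
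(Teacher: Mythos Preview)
Your approach is essentially the paper's, and the argument is correct in outline. Two small points of divergence are worth flagging.

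First, the base category: Knop's example for $\GL_n(E)$ (cited in the paper as \cite[Example\,5,\,p.\,606]{knop}) uses the category $\mathsf{Vec}(E)$ itself, \emph{not} its opposite; the relations from $[X]$ to $[Y]$ are then literally the subspaces of the product $X\oplus Y$, and the indecomposable surjections are honest surjective linear maps $U\twoheadrightarrow V$ with $\dim V=\dim U-1$. Your use of $\mathsf{Vec}_E^{opp}$ is by analogy with the $S_t$ case but is not what Knop does here. The counts (subspaces vs.\ quotients, surjections vs.\ injections) happen to coincide, so your formulas $\omega_e=q^n-q^{\dim v}$ and $i(\Std_t^{\otimes k})=\#\{\text{subspaces of }E^k\}$ come out right, but you should align the setup with Knop's conventions rather than rely on this self-correcting accident.

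Second, the convergence step: the paper isolates exactly your final paragraph as a separate lemma (Lemma~\ref{lm-hb}), attributed to Heath--Brown~\cite[Lemmas\,17 and 18]{heath-brown}, which says that for random variables supported on $\{q^r:r\geq 0\}$, convergence of moments to a sequence $m_k\ll q^{k(k+1)/4}$ implies convergence in law to the unique such measure with those moments. Your sketch (tightness from moment bounds, subsequential limits supported on powers of $q$, uniqueness via a Vandermonde-type inversion) is the content of that lemma. So there is no genuine obstacle; you have simply reproduced its proof inline rather than citing it. Your direct orbit-counting computation of $\expect(|\Fix(X_n)|^k)$ as the number of subspaces of $E^k$ of codimension $\leq n$ is a pleasant extra that the paper does not include, since the categorical argument already delivers the answer.
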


\begin{proof}
  We argue as in the proof of Theorem~\ref{th-fix}, using instead the base
  category $\mathsf{Vec}(E)$ of finite\nobreakdash-dimensional $E$-vector spaces and the degree function~$\delta(e\colon U\to V)=t^{\dim_E(\ker(e))}$
  for a surjective $E$-linear map to construct Knop's category $\mcC_t$.  We use as before the unit object
  $1_t=[\{0\}]$ and the standard object $\Std_t=[E]$, which is
  self-dual.

  Specializing to $t=|E|^n$ for some integer $n\geq 1$, the quotient
  $\overline{\mcC}_{|E|^n}$ is naturally equivalent to the category of
  finite-dimensional complex representations of~$\GL_n(E)$
  (see~\cite[Example\,5,\,p.\,606]{knop}). We obtain
  \[
  \dim_{\Cc} \Hom_{\GL_n(E)}(1_n,\Std_n^{\otimes k})\leq
  \dim_{\Cc(t)} \Hom_{\mcC_t}(1_t,\Std_t^{\otimes k}), 
  \]
  where $\Std_{n}$ is the $|E|^n$-dimensional permutation representation
  of $\GL_n(E)$ associated to its natural action on~$E^n$. As before,
  there is equality if the numerical invariants $\omega_e$ are non-zero
  for indecomposable surjective $E$-linear maps $e\colon U\to V$ where
  $U$ is a subquotient of~$\Std_n^{\otimes k}$ in~$\mcC_{|E|^n}$. We
  have $\omega(e)=|E|^n-|V|$, and hence there is equality if $n\geq k$ (note
  that in $\mcC_{|E|^n}$, the tensor product is defined using the
  \emph{direct sum} of finite-dimensional $E$-vector spaces).

  On the one hand, for all $n\geq 1$, the function
  $g\mapsto |\Fix(g)|$ is the character of the standard representation,
  and on the other hand, by Knop's construction, the dimension
  \[
  \dim_{\Cc(t)} \Hom_{\mcC_t}(1_t,\Std_t^{\otimes k})
  \]
  is the number of subspaces of $E^k$. Thus, $\expect(|\Fix(X_n)|^k)$
  converges to this number. To conclude, we need however to apply
  Lemma~\ref{lm-hb} below, since in this case the size of the moments do
  not satisfy the Carleman condition, but it is known that
  \[
  |\{\text{subspaces of $E^k$}\}|\ll |E|^{k(k+1)/4}.\qedhere
 \]
\end{proof}

\begin{lemma}[Heath--Brown]\label{lm-hb}
  Let~$q\geq 1$ be an integer, and let $(m_k)_{k\geq 0}$ be a sequence
  of real numbers such that $m_k\ll q^{k(k+1)/4}$ for $k\geq 0$.
  Let $(Z_n)_{n \geq 1}$ be a sequence of random variables such that
  \begin{enumerate}
  \item[\emph{(1)}] For all $n$, the support of~$Z_n$ is contained in the set of
  powers $q^r$ for $r\geq 0$.
  \item[\emph{(2)}] For all $k\geq 0$, we have $\expect(Z_n^k)\to m_k$.
  \end{enumerate}
  Then $(Z_n)$ converges in law to a random variable $Z$ supported on
  powers of~$q$ with moments~$m_k$ for all $k\geq 0$.
\end{lemma}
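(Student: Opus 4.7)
My plan is to establish tightness, extract a subsequential limit, identify its moments, and then prove that such a limit is uniquely determined. The main difficulty lies in uniqueness: since the growth $m_k\ll q^{k(k+1)/4}$ is too fast for Carleman's condition to apply, no standard moment-problem theorem can be invoked, and one must instead exploit by hand the sparse structure of the support $\{q^r : r\geq 0\}$.

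Setting $p_n(r)=\proba(Z_n=q^r)$, convergence of the first moment gives tightness of $(Z_n)$ through Markov's inequality. A diagonal extraction on the countable set of atoms then produces a subsequence $(n_j)$ along which $p_{n_j}(r)\to p(r)$ for every $r\geq 0$. The Chebyshev-type bound $p_n(r)\leq \expect(Z_n^{k+1})\,q^{-r(k+1)}$ yields a tail estimate
\[
\sum_{r>R}p_{n_j}(r)\,q^{rk}\leq q^{-R}(1-q^{-1})^{-1}\sup_j \expect(Z_{n_j}^{k+1})
\]
that is uniform in $j$, from which I would conclude $\sum_r p(r)\,q^{rk}=m_k$ for every $k\geq 0$, and in particular $\sum_r p(r)=1$. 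Hence any subsequential limit defines a probability measure on $\{q^r\}$ with the prescribed moments.

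For uniqueness, let $\mu$ and $\nu$ be two such measures. The same Chebyshev bound, combined with $m_k\ll q^{k(k+1)/4}$ and optimization in $k$ (around $k\approx 2r$), yields the super-exponential decay $\mu(\{q^r\}),\nu(\{q^r\})\leq C\,q^{-r^2+r/2}$. Setting $a_r=\mu(\{q^r\})-\nu(\{q^r\})$, the generating function $F(z)=\sum_{r\geq 0}a_r z^r$ is therefore entire, and a saddle-point estimate for its Taylor coefficients gives
\[
\log|F(z)|\leq (\log|z|)^2/(4\log q)+O(\log|z|),
\]
so $F$ has order zero. The moment identities force $F(q^k)=0$ for every $k\geq 0$. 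If $F\not\equiv 0$, Hadamard's factorization theorem for order-zero entire functions implies $F(z)=c\prod_{k\geq 0}(1-z/q^k)\cdot G(z)$, where $G$ accounts for any additional zeros of $F$; evaluating the canonical product along $|z|=q^{N+1/2}$ produces the matching lower bound $\log|F(z)|\geq (\log|z|)^2/(2\log q)+O(\log|z|)$, contradicting the upper bound. Hence $F\equiv 0$ and $\mu=\nu$, which upgrades subsequential convergence to full convergence in law.

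The main obstacle is precisely the contradiction between the coefficients $(4\log q)^{-1}$ and $(2\log q)^{-1}$ in the upper and lower bounds on $\log|F(z)|/(\log|z|)^2$: this is how the sparsity of the support $\{q^r\}$ compensates for the failure of Carleman's condition in the moment problem, and it is the only non-routine ingredient in the argument.
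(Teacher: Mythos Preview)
Your argument shares the paper's skeleton: tightness from the first moment, identification of the moments of any subsequential limit via uniform tail control, and then a uniqueness statement to upgrade to full convergence. The paper, however, does not prove the uniqueness step; it simply cites Heath--Brown's Lemmas~17--18 (stated there for $q=4$) as a black box. Your self-contained entire-function argument for uniqueness is therefore a genuine addition rather than a mere rephrasing.

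There is one soft spot in that uniqueness step. Writing $F=c\,P\,G$ with $P(z)=\prod_{k\geq 0}(1-z/q^{k})$ and bounding $|P(z)|$ from below on the circle $|z|=q^{N+1/2}$ does \emph{not} by itself yield a lower bound on $|F(z)|$: you would still need $|G(z)|$ not to be small there, and nothing in the Hadamard factorisation prevents the remaining zeros of $F$ from clustering near that circle. The clean repair is to bypass the factorisation and apply Jensen's formula directly to~$F$. If $F\not\equiv 0$ (after dividing out any power of~$z$), its zero-counting function satisfies $n(t)\geq \lfloor \log t/\log q\rfloor+1$ for $t\geq 1$, whence
\[
\log M(R)\;\geq\;\int_{1}^{R}\frac{n(t)}{t}\,dt\;-\;O(1)\;\geq\;\frac{(\log R)^{2}}{2\log q}\;-\;O(1),
\]
contradicting your coefficient upper bound $\log M(R)\leq (\log R)^{2}/(4\log q)+O(\log R)$. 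With this adjustment the proof is complete for $q\geq 2$; the case $q=1$ is trivial, since then every $Z_n$ is the constant~$1$ and your geometric tail bound $q^{-R}$ is not needed.
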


\begin{proof}
  This is implicit in~\cite[Lemmas\,17 and 18]{heath-brown}.  More
  precisely, it follows from standard results in the method of moments
  that the second assumption implies that any subsequence of $(Z_n)_{n \geq 0}$
  which converges in law has a limit with moments~$m_k$, and it is
  elementary from the first assumption that all such limits are
  supported on powers of~$q$.  Heath--Brown's result (proved
  in~\cite{heath-brown} in the case $q=4$, but with immediate
  generalization) is that there is a unique probability measure on~$\Rr$
  with these two properties. Since moreover the convergence of moments
  implies uniform integrability (or tightness), this means that the
  sequence $(Z_n)_{n \geq 0}$ is relatively compact and has a unique limit point, and 
  hence converges. The stated properties of the limit are then clear.
\end{proof}

\begin{remark}
  A result of Christiansen~\cite{christiansen} (also cited by Fulman and
  Stanton) shows that the limiting measure of
  Proposition~\ref{pr-fulman}, as a measure on~$\Rr$, is not
  characterized by its moments. Thus, some extra condition is necessary to ensure
  uniqueness, and this is provided by the assumption that the support is
  restricted to powers of~$q$.
\end{remark}

Considering another example of Knop leads by the same method to a
similar result which is new, to the best of our knowledge.

\begin{proposition}\label{pr-bis}
  Let~$E$ be a finite field and let $(X_n)_{n \geq 1}$ be a sequence of random
  variables with~$X_n$ uniformly distributed in the affine-linear group
  $\Aff_n(E)$ of~$E^n$.

  The sequence $(|\Fix(X_n)|)_{n\geq 1}$, where $\Fix(g)$ is the set of
  fixed points of~$g\in\Aff_n(E)$, converges in law as $n\to
  +\infty$. For $k\geq 0$, the $k$-th moment of the limiting
  distribution is equal to the number of affine subspaces of $E^{k-1}$.
  \par
  Moreover, the $k$-th moment of $|\Fix(X_n)|$ is equal to the limiting
  moment for $n\geq k$.
\end{proposition}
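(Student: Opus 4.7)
The plan is to argue in parallel to the proof of Proposition~\ref{pr-fulman}, replacing Knop's tensor envelope of the category $\mathsf{Vec}(E)$ of finite $E$-vector spaces by the analogous envelope of the category $\mathsf{Aff}(E)$ of finite $E$-affine spaces, equipped with a degree function $\delta(e)=t^{\dim_E(\ker\bar e)}$ where $\bar e$ denotes the linear part of the affine surjection $e$. Call the resulting $\Cc(t)$-linear semisimple tensor category $\mcC_t$; its unit object is $1_t=[\mathrm{pt}]$ (the one-point affine space) and its standard object $\Std_t=[E]$ is self-dual.

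First I would specialize $t=|E|^n$ and use Knop's identification (following \cite{knop}) of the semisimplification $\overline{\mcC}_{|E|^n}$ with the category of finite-dimensional complex representations of $\Aff_n(E)$: under this equivalence $\Std_{|E|^n}$ corresponds to the $|E|^n$-dimensional permutation representation attached to the natural action of $\Aff_n(E)$ on $E^n$, whose character is $g\mapsto|\Fix(g)|$. The same character-theoretic identity as in the linear case then yields
\[
\expect(|\Fix(X_n)|^k)=\dim_{\Cc}\Hom_{\Aff_n(E)}(1_n,\Std_{|E|^n}^{\otimes k})\leq\dim_{\Cc(t)}\Hom_{\mcC_t}(1_t,\Std_t^{\otimes k}),
\]
with equality as soon as Knop's obstructions $\omega_e=|E|^n-|E|^{\dim v}$ are non-zero for every indecomposable surjection $e\colon u\to v$ in $\mathsf{Aff}(E)^{opp}$ with $u$ a subquotient of $\Std_t^{\otimes k}$; since such $v$ has dimension strictly less than $k$, this holds whenever $n\geq k$.

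Second, I would identify the generic dimension $\dim_{\Cc(t)}\Hom_{\mcC_t}(1_t,\Std_t^{\otimes k})$ with the number of affine subspaces of $E^{k-1}$ by examining Knop's basis of morphisms (indexed by relations, i.e.\ subobjects of products in $\mathsf{Aff}(E)$). The drop of dimension compared to Proposition~\ref{pr-fulman}, where linear subspaces of $E^k$ appeared, reflects the fact that the unit object $[\mathrm{pt}]$ pins down one affine degree of freedom (equivalently, it fixes a base-point), so that relations from $[\mathrm{pt}]$ into $\Std_t^{\otimes k}=[E^k]$ correspond to affine subspaces of a $(k-1)$-dimensional translation-quotient of $E^k$.

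Finally, to upgrade convergence of moments to convergence in law I would invoke Lemma~\ref{lm-hb}: the number of affine subspaces of $E^{k-1}$ grows like $|E|^{O(k^2)}$, so the Carleman condition fails but Heath--Brown's growth hypothesis is satisfied, and $|\Fix(X_n)|$ takes values in $\{0\}\cup\{|E|^r:r\geq 0\}$, so after accounting for a possible atom at zero the lemma applies verbatim. The main obstacle, to my mind, will be the Knop-theoretic inputs: verifying that the affine tensor envelope at $t=|E|^n$ really recovers $\mathrm{Rep}(\Aff_n(E))$ with $\Std_{|E|^n}$ equal to the intended permutation representation, and pinning down the combinatorial bijection between Knop's basis of morphisms and affine subspaces of $E^{k-1}$ (with the correct exponent $k-1$). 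Once these two inputs are in place, the rest of the proof is a word-for-word transcription of the argument for Proposition~\ref{pr-fulman}.
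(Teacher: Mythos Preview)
Your approach is exactly the paper's: the proof of Proposition~\ref{pr-bis} in the paper is a single sentence saying ``We argue as above with the base category $\mathcal{A}$ of (non-empty) affine spaces over~$E$'' and citing Knop's examples~6 and~7 (pp.\,597 and~607 of~\cite{knop}). Your proposal is in fact more detailed than the paper's own proof, and the two points you flag as the main obstacles---that the semisimplification of $\mcC_{|E|^n}$ recovers $\mathrm{Rep}(\Aff_n(E))$ with the correct standard object, and the combinatorial identification of Knop's basis of $\Hom_{\mcC_t}(1_t,\Std_t^{\otimes k})$ with affine subspaces of $E^{k-1}$---are precisely the content of the two cited examples in Knop's paper, so they are inputs rather than things to be reproved.

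One small caution: your heuristic for why the exponent drops to $k-1$ (``the unit object $[\mathrm{pt}]$ pins down one affine degree of freedom'') is suggestive but not a proof; the honest route is simply to read off the description of relations in Knop's Example~6 rather than to improvise the bijection. Likewise, your invocation of Lemma~\ref{lm-hb} needs the observation that $|\Fix(X_n)|$ takes values in $\{0\}\cup\{|E|^r\}$, which you note; the atom at~$0$ is handled exactly as in the $\GL_n$ case.
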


\begin{proof}
  We argue as above with the base category $\mathcal{A}$ of (non-empty)
  affine spaces over~$E$
  (see~\cite[p.\,597,\,Ex.\,6;\,p.\,607,\,Ex.\,7]{knop}).
\end{proof}

\subsection{Fixed-point statistics for complex vector spaces}\label{sec-3.3}

  It is also natural to consider the category $\Rep(GL_t)$ of Deligne and Milne (see~\cite[\S\,10,\,Déf.\,10.2]{deligne}), interpolating the categories of representations of $\GL_n(\Cc)$. Indeed, the argument applies
  rather similarly, and leads to the analogue of Theorem~\ref{th-fix} in
  this context: the direct image under the
  trace $\mathrm{Tr}\colon U_n \to \Cc$ of the probability Haar measure on the unitary
  group~$U_n$ converges as~$n\to+\infty$ to a standard complex
  gaussian. This was first proved by Diaconis and Shashahani~\cite{d-s};
  see also Larsen's paper~\cite{larsen} for the case of the symplectic
  or orthogonal groups
  and real gaussians.  

  First, by Corollary~\ref{cor-general-invariant}, the assignment
  \[
  i(M)=\dim_{\Cc(t)}\Hom_{\Rep(GL_t)}(1_t, M)
  \] defines a positive invariant on
  $\Rep(GL_t)$. One can then show that there exists an object~$\Std_t$, which for $t=n$ corresponds to the standard representation of $\GL_n(\Cc)$ through the equivalence from the semisimplification of $\Rep(GL_t)$ to the category of representations of $\GL_n(\Cc)$, satisfying 
  \[
    i(\Std_t^{\otimes a}\otimes\dual(\Std_t)^{\otimes b})=\dim_{\Cc(t)}
    \Hom(1_{t}, \Std_t^{\otimes a}\otimes\dual(\Std_t)^{\otimes
      b})=
    \begin{cases} 0 & \text{if $a\not=b$,} \\
      a! & \text{if $a=b$.}
    \end{cases}
  \]
More precisely, with the notation of~\cite[Déf.\,10.2]{deligne}, the
  object $\Std_t$ corresponds to the pair of finite sets $(\{1\},\emptyset)$ and is denoted by $X_0^{\otimes \{1\}}$. Thus, 
  $\Std_t^{\otimes a}\otimes \dual(\Std_t)^{\otimes b}$ corresponds to the
  pair $(\{1,\ldots, a\},\{1,\ldots,b\})$ and the value of
  $i(\Std_t^{\otimes a}\otimes \dual(\Std_t)^{\otimes b})$ is the
  dimension of the space
  \[
  \Hom((\emptyset,\emptyset),(\{1,\ldots,a\},\{1,\ldots, b\})), 
  \]
  which is by definition the number of bijections
  $\{1,\ldots, b\}\to \{1,\ldots, a\}$. These values are known to be equal to the moments
  \[
  \frac{1}{\pi} \int_{\Cc}z^a\bar{z}^b e^{-|z|^2}dz
  \]
  of a standard complex gaussian random
  variable, which is therefore the spectral measure associated
  to~$\mathrm{Std}_t$. Using a stabilization property of the
  corresponding invariants for $\GL_n(\Cc)$ when $n>a+b$, one gets
  convergence as before (\cf~\cite[Prop.\,10.6]{deligne}).

 This proof is not as satisfactory as that of
  Theorem~\ref{th-fix}, because Deligne and Milne's definition of $\Rep(GL_t)$
  involves some \emph{a priori} knowledge of stability properties of
  representations and linear invariants of $\GL_n(\Cc)$. The argument does show, however, that the convergence to
  the gaussian can be interpreted in terms of spectral measures, and
  that the standard gaussian can also be interpreted as a ``generalized''
  Sato--Tate measure.  Moreover, it suggests the question:
  what are the spectral measures for other objects of
  $\Rep(GL_t)$?

 \begin{remark}\label{non-uniqueness}
   Since Berg~\cite{berg} proved that the third power of a real
   gaussian random variable is not determined by its moments, the
   third tensor power of $\Std_t \oplus \dual(\Std_t)$ (which, thanks
   to Proposition~\ref{pr-image-measures}, has spectral measure the
   cube of a real gaussian), gives an example of an object of
   $\Rep(GL_t)$ whose spectral measure is not unique. Once a spectral
   measure is not unique, it is a classical fact from the solution of
   the Hamburger moment problem that the set of all possible
   $\mu(i, M)$ has a rather complicated structure, as explained in
   \cite[Ch.\,7]{schmudgen}.
 
 \end{remark}

 One can argue similarly with the category $\Rep(O(t))$ of
 Deligne~\cite[\S\,9,\,Déf.\,9.2]{deligne}, which interpolates representations of orthogonal
 groups: the standard object~$\Std_t$ in this category is self-dual and
 satisfies
 \[
   \dim_{\Cc(t)} \Hom_{\Rep(O(t))}(1_t,\Std_t^{\otimes a})=
   |\{\text{partitions of $\{1,\ldots,a\}$ with all parts of
     size~$2$}\}|
 \]
 by definition, which coincides with the $a$-th moment of the
 standard real gaussian (namely, it is~$0$ when~$a$ is odd, and
 equal to $a!/(2^{a/2}(a/2)!)$ when~$a$ is even). Thus, the standard
 real gaussian is the (unique) spectral measure of~$\Std_t$
 in~$\Rep(O(t))$ relative to the invariant $i(M)=\dim_{\Cc(t)}
 \Hom(1_t,M)$. The corresponding convergence theorem is that of the
direct image under the trace of the probability Haar measure of~$O(n)$ to
 the standard real gaussian.

\section{Proof of Theorem~\ref{th-fi}}

Let~$m\geq 1$ be an integer and let~$\lambda$ be a partition
of~$m$ with parts $\lambda_1 \geq \lambda_2 \geq \cdots$. Recall that the statement concerns the limiting behavior of the measures~$(\chi_{\lambda,n}(X_n))_{n \geq m+\lambda_1}$, where~$X_n$ is a uniformly distributed random permutation in $S_n$ and $\chi_{\lambda, n} \colon S_n \to \Cc$ is the character of the representation of $S_n$ corresponding to the partition $(n-m, \lambda_1, \lambda_2, \dots)$. 

The argument will consist of two stages:
\begin{itemize}
\item We prove \emph{a priori} that the sequence of
  measures~$(\chi_{\lambda,n}(X_n))_{n \geq m+\lambda_1}$ converges in law as $n\to+\infty$ to some measure $\mu_\lambda$. 
\item We compute the moments of the limiting measure and show that they
  coincide with those of a spectral measure of the object $x_{\lambda, t}$ of the Deligne--Knop category. 
\end{itemize}


\begin{lemma}\label{eqn:lemmalimitcycle}
  The sequence~$(\chi_{\lambda,n}(X_n))_{n \geq m+\lambda_1}$ converges in law to a measure $\mu_\lambda$ as $n\to+\infty$.
\end{lemma}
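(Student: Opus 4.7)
The plan is to deduce convergence in law from the continuous mapping theorem applied to the polynomial Poisson limits of cycle counts. I would proceed in three steps.

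First, I would invoke the existence of a \emph{character polynomial}: for every partition $\lambda$ of $m$, there is a polynomial $P_\lambda \in \Qq[X_1,\dots,X_m]$, independent of $n$, such that
\[
\chi_{\lambda,n}(\sigma) = P_\lambda(C_1(\sigma),\dots,C_m(\sigma))
\]
for every $n \geq m+\lambda_1$ and every $\sigma \in S_n$, where $C_i(\sigma)$ denotes the number of $i$-cycles of $\sigma$. This is a classical stability result for symmetric-group characters, going back to Frobenius and closely related to the FI-module stability used in the Corollary following Theorem~\ref{th-fi} (see~\cite{cef}); it can be proved for instance via the Murnaghan--Nakayama rule or via Jucys--Murphy elements.

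Second, I would use the classical joint Poisson convergence
\[
(C_1(X_n),\dots,C_m(X_n)) \longrightarrow (Y_1,\dots,Y_m)
\]
in law as $n \to +\infty$, where $Y_1,\dots,Y_m$ are independent Poisson variables with respective parameters $1,1/2,\dots,1/m$. This is due essentially to Goncharov and can itself be proved by the method of moments in the same style as Theorem~\ref{th-fix}. Since $P_\lambda$ is continuous (being a polynomial), the continuous mapping theorem then yields
\[
\chi_{\lambda,n}(X_n) = P_\lambda(C_1(X_n),\dots,C_m(X_n)) \longrightarrow P_\lambda(Y_1,\dots,Y_m)
\]
in law, and I define $\mu_\lambda$ as the distribution of the right-hand side.

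The main obstacle is the existence of the character polynomial $P_\lambda$: although classical, this is a substantial representation-theoretic input not intrinsic to the tensor-envelope framework of the paper. An alternative route more in line with the Deligne--Knop methodology would be a pure method of moments: compute
\[
\expect(\chi_{\lambda,n}(X_n)^k) = \dim_{\Cc} \Hom_{\mathrm{Rep}(S_n)}(1_n,\pi_{\lambda,n}^{\otimes k})
\]
(using that characters of $S_n$ are real-valued), apply the Deligne--Knop stabilization in the spirit of Lemma~\ref{lm-dk-specialization} to conclude convergence of moments to $\dim_{\Cc(t)} \Hom_{\mcC_t}(1_t,x_{\lambda,t}^{\otimes k})$, and deduce convergence in law from uniqueness of the limit measure. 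The difficulty in this alternative is that the natural Bell-number bound $\dim_{\Cc(t)} \Hom_{\mcC_t}(1_t, x_{\lambda,t}^{\otimes k}) \leq B_{mk}$ is not sharp enough to verify Carleman's condition when $|\lambda| \geq 3$, so this route would require either a sharper estimate on these dimensions or a Heath-Brown--type auxiliary argument analogous to Lemma~\ref{lm-hb}.
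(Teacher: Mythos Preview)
Your proposal is correct and follows essentially the same approach as the paper: invoke the existence of a character polynomial expressing $\chi_{\lambda,n}$ in terms of cycle counts, appeal to the classical joint Poisson limit for cycle counts, and apply the continuous mapping theorem. Your additional discussion of the obstacles in a pure method-of-moments alternative is also consonant with the paper, which notes in a remark that it would be interesting to prove this lemma without character polynomials.
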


\begin{proof}
  For each $i\geq 1$, let $\ell_i(\sigma)$ denote the number of $i$-cycles (fixed points if~$i=1$) in the representation of $\sigma$ as a product of cycles with disjoint support. It is
  known from the theory of symmetric functions that there exists a so-called character
  polynomial $q_{\lambda}\in \Qq[(L_i)_{i\geq 1}]$ such that, for all
  large enough $n$, the equality
  \[
  \chi_{\lambda,n}(\sigma)=q_{\lambda}(\ell_1(\sigma),\ldots,\ell_i(\sigma),
  \ldots)
 \]
  holds for all $\sigma \in S_n$ (see, for
  instance,~\cite[Ex.\,I.7.14]{macdonald}). Since the sequences $(\ell_i(X_n))_{i\geq 1}$ are also known to converge
  in law as $n\to +\infty$ to a sequence $(P_{1/i})_{i\geq 1}$ of independent Poisson random variables with parameters $1/i$
  (see, e.g.,~\cite[Th.\,7]{d-s}), the sequence~$(\chi_{\lambda,n}(X_n))_{n \geq m+\lambda_1}$ converges in law to $\mu_\lambda=q_{\lambda}(P_1,\ldots, P_{1/i},\ldots)$.
\end{proof}

\begin{remark} In the spirit of Remark \ref{remarkotherasympt}\,(2), it would be interesting to prove Lemma \ref{eqn:lemmalimitcycle} without using character polynomials. 

\end{remark}

The second step will rely on Deligne's construction of the category of representations of~$S_t$, which enjoys some functoriality properties that have not been explicitly established by Knop. Since Theorem~\ref{th-fi} is
new, the fact that Deligne's definition involves some a priori knowledge
of representations of the symmetric groups is not an instance of
circular reasoning.

Let $A$ be a commutative ring and $t \in A$. We use the
$A$-linear category $\Del(S_t,A)$ of Deligne~\cite[Déf.\,2.17]{deligne}, keeping the notation $\Del(S_t)$ for $A=\Cc(t)$ and the indeterminate~$t$. The basic objects of this category are associated to finite sets $U$ and denoted by\footnote{Deligne's basic generators~$[U]$ are not the same as the basic objects in Knop's definition, but the
  precise relation between them is explained by Knop
  in~\cite[Rem.\,1.2]{knop2}.} $[U]$; their Hom spaces are introduced in~\cite[Déf.\,2.12]{deligne}. For each integer~$N\geq 0$, we consider the full subcategory $\Del(S_t,A)^{(N)}$ whose objects are
the direct factors of sums of~$[U]$ for $U$ of cardinality~$\leq N$. Deligne~\cite[Prop.\,5.1]{deligne} proved that
$\Del(S_t)^{(N)}$ is a semisimple abelian category if $t$ is not an
integer between~$0$ and~$2N-2$. Moreover, under the assumptions 
\begin{equation}\label{eq-assumption}
  t-k\in A^{\times}\text{ for } 0\leq k\leq 2N-2 \quad \text{and}\quad 
  N!\in A^{\times}, 
\end{equation} he associated to
any pair $(y,\rho)$ consisting of a finite set~$y$ with $|y|\leq 2N$
and an irreducible representation~$\rho$ of the symmetric group~$S_y$,
an object $\uple{x}_{y,\rho,A}$ of $\Del(S_t,A)^{(N)}$; see~\hbox{\cite[Prop.\,5.1 and Rem.\,5.6]{deligne}.} (This object is independent, up to isomorphism, of the choice of~$N$, provided \eqref{eq-assumption} holds, and hence the value of~$N$ is omitted from the
notation.)

The objects $\uple{x}_{y,\rho,A}$ are functorial with respect to~$A$ under the natural
base-change functor
\[
T_{A,B}\colon \Del(S_t,A)\longrightarrow \Del(S_t,B)
\]
when~$B$ is an~$A$-algebra (see~\cite[Déf.\,2.17]{deligne}), i.e., there
are isomorphisms
\[
\uple{x}_{y,\rho,B}\simeq T_{A,B}(\uple{x}_{y,\rho,A}).
\]
If $B$ is a field of characteristic zero and the image of $t$ is not a non-negative integer,
then the full category $\Del(S_t,B)$ is a semisimple abelian category, and its simple objects are precisely those of the form $\uple{x}_{y,\rho,B}$, for a unique pair $(y,\rho)$, up to isomorphism.


From now on, we fix an integer~$N \geq 1$ and consider the ring 
\[
A=\Cc[t]\Bigl[\Bigl(\frac{1}{t-k}\Bigr)_{0\leq k\leq 2N-2}\Bigr], 
\]
which is a principal ideal domain (being a localization of the principal ideal domain $\Cc[t]$) and satisfies the assumption~(\ref{eq-assumption}). 

Let $m\geq 1$ be an integer and~$\lambda$ a partition of~$m$. We then
set $\uple{x}_{\lambda,A}=\uple{x}_{y,\rho,A}$, where
$y=\{1,\ldots, m\}$ and $\rho$ is the irreducible representation
of~$S_m$ associated to the partition~$\lambda$. We denote by
$x_{\lambda,t}$ the base change of $\uple{x}_{\lambda,A}$ to
$\Del(S_t)$ under the natural inclusion $A \hookrightarrow \Cc(t)$. Furthermore, if $n>2N-2$, then we denote by
$x_{\lambda,n}$ the base change of $\uple{x}_{\lambda,A}$ to
$\Del(S_n)$ under the morphism $A \to \Cc(t)$ that maps~$t$ to $n$.


We begin with a lemma generalizing the first step of the proof of
Lemma~\ref{lm-dk-specialization} (in the sense that it shows that
certain Hom spaces have the same dimension in all Deligne--Knop
categories $\Del(S_t)$, even when $t$ is a non-negative integer,
provided it is ``large enough'').

\begin{lemma}\label{lm-dk-2}
  Let~$\lambda$ be a partition of an integer $m\geq 1$ and let~$a\geq 0$ be an integer. For any integer~$n \geq 4am-1$, the following equality holds: 
  \[
  \dim_{\Cc(t)} \Hom_{\Del(S_t)}(1_t,x_{\lambda,t}^{\otimes a}) =\dim_{\Cc}
  \Hom_{\Del(S_n)}(1_n,x_{\lambda,n}^{\otimes a}). 
  \]
\end{lemma}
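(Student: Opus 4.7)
The plan is to work uniformly over an auxiliary principal ideal domain and exploit the functoriality of Deligne's construction under base change. I set $N = 2am$ and
\[
A = \Cc[t]\Bigl[\Bigl(\frac{1}{t-k}\Bigr)_{0 \leq k \leq 2N-2}\Bigr],
\]
which is a principal ideal domain (being a localization of $\Cc[t]$) and satisfies the assumption~(\ref{eq-assumption}). Since $\uple{x}_{\lambda, A}$ belongs to $\Del(S_t, A)^{(m)}$ and the filtration $\Del(S_t, A)^{(\cdot)}$ is compatible with tensor products (objects of $\Del(S_t, A)^{(N_1)}\otimes \Del(S_t, A)^{(N_2)}$ lie in $\Del(S_t, A)^{(N_1+N_2)}$), the tensor power $\uple{x}_{\lambda, A}^{\otimes a}$ lies in $\Del(S_t, A)^{(am)} \subseteq \Del(S_t, A)^{(N)}$. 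By definition, this object is then a direct summand of some $P = \bigoplus_i [U_i]$ with $|U_i| \leq am$.

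Next, I consider the $A$-module $H = \Hom_{\Del(S_t, A)}(1_A, \uple{x}_{\lambda, A}^{\otimes a})$. By the definition of morphisms in Deligne's category~\cite[Déf.\,2.12]{deligne}, each $\Hom_{\Del(S_t, A)}([\emptyset], [U_i])$ is a free $A$-module of finite rank, with basis indexed by the partitions of $U_i$. Consequently $\Hom_{\Del(S_t, A)}(1_A, P)$ is a free $A$-module, and $H$, as a direct summand, is a finitely generated projective $A$-module. Since $A$ is a PID, $H$ is in fact free, of some rank $r$.

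To conclude, I appeal to the functoriality of base change (\cite[Déf.\,2.17]{deligne}), which is symmetric monoidal and commutes with the formation of $\Hom$ spaces in the evident sense. The inclusion $A \hookrightarrow \Cc(t)$ yields $H \otimes_A \Cc(t) \simeq \Hom_{\Del(S_t)}(1_t, x_{\lambda, t}^{\otimes a})$. For $n \geq 4am - 1 = 2N - 1$, the integer $n$ lies outside $\{0, 1, \ldots, 2N-2\}$, so the ring homomorphism $A \to \Cc$ sending $t \mapsto n$ is well-defined, and the analogous base change provides $H \otimes_A \Cc \simeq \Hom_{\Del(S_n)}(1_n, x_{\lambda, n}^{\otimes a})$. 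Since $H$ is free of rank $r$, both vector spaces have dimension $r$, which yields the stated equality.

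The main obstacle is the freeness of $H$, which rests on the explicit free-module description of $\Hom_{\Del(S_t, A)}([\emptyset], [U])$ over $A$ and on the compatibility of the filtration $\Del(S_t, A)^{(\cdot)}$ with tensor products. Both of these are built into Deligne's construction in~\cite{deligne}; once invoked, the rest is a routine application of flat base change together with the structure theorem for finitely generated modules over a PID.
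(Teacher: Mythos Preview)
Your proof is correct and shares the paper's essential strategy: work over the principal ideal domain $A=\Cc[t][(t-k)^{-1}\mid 0\leq k\leq 2N-2]$ with $N=2am$, place $\uple{x}_{\lambda,A}^{\otimes a}$ inside $\Del(S_t,A)^{(N)}$, and then specialize to $\Cc(t)$ and to $\Cc$ via $t\mapsto n$ for $n\geq 2N-1$.

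The execution differs slightly. The paper invokes Deligne's \cite[Rem.\,5.6]{deligne} to decompose $\uple{x}_{\lambda,A}^{\otimes a}\simeq\bigoplus_{|\mu|\leq N} v(\mu)\,\uple{x}_{\mu,A}$ over~$A$, then uses that $\Hom(\uple{x}_{\mu,A},\uple{x}_{\nu,A})$ is $A$ or $0$ according as $\mu=\nu$ or not; both dimensions are then identified with the multiplicity of the unit object in this decomposition. You bypass the decomposition into simples and argue directly that $H=\Hom_{\Del(S_t,A)}(1_A,\uple{x}_{\lambda,A}^{\otimes a})$ is free (being a direct summand of a free module over a PID), so that its rank is preserved under any base change. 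Your route is a touch more economical, since it does not require the classification of simple objects over~$A$; the paper's route has the minor advantage of explicitly identifying the common dimension as a structure constant $v(\mu_0)$, where $\mu_0$ is the partition indexing the unit object. The point you label as the ``main obstacle'' (compatibility of Hom with base change on the pseudo-abelian envelope) is indeed immediate once one notes that images of idempotents are direct summands, hence commute with $-\otimes_A B$.
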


\begin{proof}
  Let $N\geq 1$ be an integer such that
  $N\geq 2am$. Then both~$\uple{x}_{\lambda,A}$ and 
  $\uple{x}_{\lambda,A}^{\otimes a}$ are objects
  of~$\Del(S_t,A)^{(N)}$ (this follows from the fact that the tensor
  product of two basic objects $[U]$ and~$[V]$ is a direct sum of
  objects~$[W]$ with~$|W|\leq |U|+|V|$; see~\cite[\S\,5.10]{deligne}).
  Consequently, by~\cite[Rem.\,5.6]{deligne} and the fact that $A$ is principal, there is a direct sum decomposition
  \begin{equation}\label{eqn:dsd}
  \uple{x}_{\lambda,A}^{\otimes a}\simeq \bigoplus_{|\mu|\leq N}
  v(\mu)\uple{x}_{\mu,A}
  \end{equation}
  for some non-negative integers~$v(\mu)$, where the sum is over
  partitions of integers~$\leq N$. Assume~$n>2N-2$.  Applying base-change to~$\Cc(t)$ and to~$\Cc$ as above
  $t\mapsto n$, we derive from~\eqref{eqn:dsd} direct sum decompositions
  \[
  x_{\lambda,t}^{\otimes a}\simeq \bigoplus_{|\mu|\leq N}v(\mu)x_{\mu,t}
  \quad\quad x_{\lambda,n}^{\otimes a}\simeq \bigoplus_{|\mu|\leq
    N}v(\mu)x_{\mu,n}.
  \]

  By~\cite[Rem.\,5.6]{deligne}, the objects $\uple{x}_{\mu,A}$ have the property that
  \[
  \Hom(\uple{x}_{\mu,A},\uple{x}_{\nu,A})=\begin{cases} 0 &\text{ if }
  \mu\not=\nu, \\ A &\text{ if }
  \mu=\nu.\end{cases}
  \]
  Since the unit objects of~$\Del(S_t)$ and~$\Del(S_n)$ are $x_{\mu,t}$ and $x_{\mu,n}$, respectively, for the partitition $\mu=(m)$ corresponding to
  the trivial representation of~$S_m$, we therefore deduce from these
  decompositions that the equalities
  \[
  \dim_{\Cc(t)} \Hom_{\Del(S_t)} (1_t,x_{\lambda,t}^{\otimes a})=v((m))= \dim_{\Cc}
  \Hom_{\Del(S_n)}(1_n,x_{\lambda,n}^{\otimes a})
  \]
  hold for all $n \geq 4am-1$, which concludes the proof.
\end{proof}

\begin{remark}
  A combinatorial formula for
  \[
  \dim_{\Cc(t)} \Hom_{\Del(S_t)}(1_t,x_{\lambda,t}^{\otimes a})
  \]
  has been obtained (in the generality of tensor envelopes) by Knop~\cite[Cor.\,5.4,\,Ex.\,5.6]{knop3}.
\end{remark}

\begin{proof}[End of the proof of Theorem~\ref{th-fi}]
Let~$a\geq 0$ be an integer. By Lemma~\ref{lm-dk-2}, the equalities
\[
i(x_{\lambda,t}^{\otimes
  a})=\dim_{\Cc(t)}\Hom_{\Del(S_t)}(1_t,x_{\lambda,t}^{\otimes a})
=\dim_{\Cc}\Hom_{\Del(S_n)}(1_n,x_{\lambda,n}^{\otimes a})
\] hold for all large enough integers~$n$. Besides, Deligne~\cite[Prop.\,6.4]{deligne} has shown that, provided~$n>2m$, the semisimplification functor
\[
\Del(S_n)\to \Del(S_n)/\mathcal{N}_n=\mathrm{Rep}(S_n)
\]
maps the object~$x_{\lambda,n}$ to the representation~$\pi_{\lambda,n}$ of~$S_n$ associated to the partition $\lambda^{(n)}$. 
Thus, we obtain the lower bound
$$
i(x_{\lambda,t}^{\otimes
  a})=\dim_{\Cc}\Hom_{\Del(S_n)}(1_n,x_{\lambda,n}^{\otimes a})\geq
\dim_{\Cc} \Hom_{\mathrm{Rep}(S_n)}(1_n,\pi_{\lambda,n}^{\otimes a}),
$$
with equality if and only if
$\mathcal{N}(1_n,x_{n,\lambda}^{\otimes a})=0$. For all large
enough (depending on~$a$ and $\lambda$) integers $n$, we have
$\mathcal{N}(1_n,x_{\lambda,n}^{\otimes a})=0$ (e.g., by Knop's
criterion),  and hence for such~$n$, we get
\[
\int_{\Rr}x^a\mu_{\lambda}(x)= i(x_{\lambda,t}^{\otimes a})=
\dim_{\Cc} \Hom_{\mathrm{Rep}(S_n)}(1_n,\pi_{\lambda,n}^{\otimes a}) =
\frac{1}{n!}\sum_{\sigma\in S_n} \chi_{\lambda,n}(\sigma)^a, 
\] as we wanted to show. This concludes the proof of Theorem~\ref{th-fi}.  
\end{proof}

\section{Arithmetic speculations}\label{sec-speculations}

The distribution of the number of fixed points of random permutations
in~$S_n$ for a given integer~$n\geq 1$ occurs naturally in number
theory as a limiting distribution for the number of zeros modulo a
prime number~$p$ of a fixed polynomial with integer coefficients~$f\in\Zz[T]$ of degree $n$ and Galois group~$S_n$. Indeed, let~$\rho_f(p)$ be this number. A special
case of Chebotarev's density theorem states in that case\footnote{\ For an arbitrary irreducible polynomial~$f \in \Zz[T]$, the corresponding limit would be the probability that a uniformly distributed random element of the Galois group of the
  splitting field of~$f$, viewed as a permutation of the complex roots of~$f$, has $r$ fixed points.} that the
limit formula
\[
  \lim_{x\to+\infty} \frac{1}{\pi(x)}\big|\{p\leq x \mid \rho_f(p)=r\}\big|
  =\frac{1}{n!}  |\{\sigma\in S_n \text{ with } 
|\Fix(\sigma)|=r\}|
\]
holds for all integers~$r\geq 0$, where~$\pi(x)$ denotes the number of
primes~$p\leq x$ (this was already observed by
Kronecker~\cite{kronecker} in 1880, who also pointed out the limiting
behavior as~$n\to+\infty$).

One may ask if a similar framework can give rise to the Poisson
distribution, viewed as the number of fixed points of a ``random
element'' of~$S_t$ for an indeterminate~$t$. Some work of Kowalski and Soundararajan~\cite[\S\,2.4]{ks1} involving
\emph{pseudopolynomials} might be related. Indeed, they have
formulated the following conjecture:

\begin{conjecture}[Kowalski--Soundararajan]\label{conjKS}
  Let~$F(n)=\sum_{k=0}^n n!/k!$ for integers $n\geq 0$. For any
  prime number~$p$, let~$\rho_F(p)$ be the number of integers~$x$ satisfying 
  $0\leq x\leq p-1$ and~$F(x)\equiv 0\mods{p}$. Then, for each integer~$r\geq 0$, the following limit formula holds: 
  \[
    \lim_{x\to+\infty} \frac{1}{\pi(x)}\big|\{p\leq x \mid
    \rho_F(p)=r\}\big|= \frac{1}{e}\frac{1}{r!}. 
  \]
  \end{conjecture}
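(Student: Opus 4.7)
The starting point is to make the conjecture explicit. For $n \geq p$, each summand $n!/k!$ of $F(n)$ with $n - k \geq p$ is a product of at least $p$ consecutive integers and hence vanishes modulo~$p$. Setting $j = n - k$, one obtains
\[
F(n) \equiv G_p(n) \mods{p}, \qquad G_p(x) = \sum_{j=0}^{p-1} x(x-1)\cdots(x-j+1),
\]
a polynomial of degree $p-1$ in~$\Fp[x]$. In particular, $F(n) \bmod p$ depends only on $n \bmod p$, and $\rho_F(p)$ equals the number of roots of $G_p$ in~$\Fp$. The conjecture thus reduces to showing that the root-count statistics of the explicit family $(G_p)_p$ converge, as $p \to +\infty$, to the Poisson distribution~$P_1$.

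The plan is then to argue that, as $p \to +\infty$, the specific polynomial~$G_p$ behaves like a uniformly chosen polynomial of degree~$p-1$ in~$\Fp[x]$. For such a random polynomial, each element of $\Fp$ is a root with probability essentially $1/p$ and these events are approximately independent, so the number of roots converges in law to~$P_1$. This is directly parallel to Theorem~\ref{th-fix}: in the classical Chebotarev setting one fixes a polynomial of degree~$n$ with Galois group~$S_n$ and lets $n \to +\infty$ after first letting $p \to +\infty$, whereas here the degree $p-1$ grows with the modulus, and one is morally in the situation of a ``polynomial of degree~$t$'' with~$t$ an indeterminate.

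The main obstacle is justifying the genericity of~$G_p$. Since the degree varies with~$p$, no single Galois group controls the asymptotics and Chebotarev's theorem cannot be invoked directly. In the spirit of this paper, the most attractive route would be to construct a Tannakian-style framework attaching the pseudopolynomial~$F$ itself to an object of~$\Rep(S_t)$, presumably the standard object~$\Std_t$, in such a way that the $a$-th moment of $\rho_F(p)$ in the prime variable~$p$ is governed by the invariant
\[
i(\Std_t^{\otimes a}) = \dim_{\Cc(t)}\Hom_{\Rep(S_t)}(1_t, \Std_t^{\otimes a})
\]
computed in Lemma~\ref{lm-standard}; the Poisson limit would then follow \emph{without} ever fixing a degree. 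A more down-to-earth strategy would be to compute the factorial moments
\[
\frac{1}{\pi(x)}\sum_{p \leq x}\rho_F(p)(\rho_F(p)-1)\cdots(\rho_F(p)-r+1)
\]
directly, and show that each converges to~$1$ as $x \to +\infty$; this reduces to counting primes~$p$ for which $G_p$ vanishes at $r$ prescribed distinct points in~$\Fp$, which in turn requires uniform equidistribution statements for the schemes over $\Spec\Zz$ cut out by systems of falling-factorial equations, likely beyond current technology. Either route would, I suspect, require substantial new ideas, which is why Conjecture~\ref{conjKS} remains open.
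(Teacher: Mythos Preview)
This statement is a \emph{conjecture}, and the paper does not prove it; it is presented in Section~\ref{sec-speculations} as an open problem supported by numerical evidence, together with the speculation that a proof might eventually come from some avatar of Chebotarev's density theorem linked to~$\Rep(S_t)$. Your proposal correctly recognizes this: you give a clean reduction to counting roots of the explicit degree-$(p-1)$ polynomial $G_p(x)=\sum_{j=0}^{p-1}x(x-1)\cdots(x-j+1)$ in~$\Fp$, sketch two plausible strategies (a Tannakian one via~$\Std_t$ and a direct factorial-moment computation), and then acknowledge that both would require substantial new input and that the conjecture remains open. Since the paper contains no proof to compare against, there is nothing further to assess; your discussion is accurate and aligns with the paper's own speculative framing.
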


A \emph{pseudopolynomial} in the sense of Hall \cite{Hall} is a sequence $(a_n)_{n \geq 0}$ of integers such that \hbox{$m-n$} divides $a_m-a_n$ for all $m>n$. Setting $G(n)=a_n$, this condition guarantees that the value $G(x)\mods{p}$ is well-defined for~$x\in\Zz/p\Zz$, independently of the choice of a representative to compute it. Besides the sequences $(f(n))_n$ of values of a polynomial with integer coefficients $f \in \Zz[X]$, a standard example is~$F(n)$ as in Conjecture \ref{conjKS}. This function can also be written as $e\int_1^\infty x^{n} e^{-x}dx$ for all $n \geq 0$ (an incomplete gamma function), or $\lfloor en!\rfloor$ for~$n \geq 1$.  

Numerical evidence in favour of Conjecture \ref{conjKS} is quite
convincing~\cite[\S\,2.4]{ks1}. We speculate that, if true, this limiting
behaviour might be explained by appealing to the properties of~$S_t$
and some avatar of Chebotarev's density theorem.

Another tantalizing experimental parallel observation is the
following. It results from Deligne's equidistribution theorem and the
work of Katz (see~\cite[Th.\,7.10.6]{esde}) that, given a
polynomial~$f\in\Zz[X]$ of degree~$n\geq 6$ whose derivative
$f'$ has Galois group~$S_{n-1}$, the exponential sums
\[
  W_f(a;p)=\frac{1}{\sqrt{p}}
  \sum_{x\mods{p}} \exp\Bigl(2\pi i \frac{af(x)}{p}\Bigr)
\]
for $a\in (\Zz/p\Zz)^{\times}$ become equidistributed as~$p\to+\infty$
like the traces of random matrices in a compact group~$K\subset
\mathrm{U}_n$ which contains~$\mathrm{SU}_n$. 

By analogy and comparison with the results of
Diaconis--Shashahani and Larsen, we are then led to expect the following:
\begin{conjecture}
  Let~$F(n)=\sum_{k=0}^n n!/k!$ for integers $n\geq 0$. For a prime number~$p$ and $a\in (\Zz/p\Zz)^{\times}$, set
  \[
    W_F(a;p)=\frac{1}{\sqrt{p}}
    \sum_{x\mods{p}}\exp\Bigl(2\pi i \frac{aF(x)}{p}\Bigr).
  \] Then the values~$(W_F(a;p))_{a\in(\Zz/p\Zz)^{\times}}$ become
  equidistributed as~$p\to+\infty$ like a standard complex gaussian,
  i.e., for any continuous bounded function~$\varphi\colon \Cc\to\Cc$, the following holds: 
  \[
    \lim_{p\to+\infty} \frac{1}{p-1} \sum_{a\in(\Zz/p\Zz)^{\times}}
    \varphi(W_F(a;p))= \frac{1}{\pi} \int_{\Cc}\varphi(z)e^{-|z|^2}dz. 
  \]
\end{conjecture}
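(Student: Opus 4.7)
The plan is to use the method of moments, parallel to the Diaconis--Shahshahani--Larsen treatment of traces of random unitary matrices (and to the arithmetic-free computation for $\Std_{t}\in\Rep(GL_{t})$ in Remark~\ref{rm-gaussian}). A standard complex gaussian $Z$ satisfies
$$
\expect(Z^{r}\ov{Z}^{s}) = r!\,\delta_{r,s},
$$
and these moments determine it uniquely via Carleman's condition, so by the method of moments it suffices to prove that for all integers $r,s\geq 0$,
$$
\lim_{p\to+\infty}\frac{1}{p-1}\sum_{a\in(\Zz/p\Zz)^{\times}}W_{F}(a;p)^{r}\,\overline{W_{F}(a;p)}^{s} = r!\,\delta_{r,s}.
$$

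Expanding the definition of $W_{F}(a;p)$ and applying orthogonality of additive characters modulo~$p$, this moment reduces (up to lower-order contributions coming from restricting to $a\neq 0$) to
$$
\frac{1}{p^{(r+s)/2}}\,\bigl|\{(\bfx,\bfy)\in(\Zz/p\Zz)^{r+s} : F(x_{1})+\cdots+F(x_{r}) \equiv F(y_{1})+\cdots+F(y_{s})\mods{p}\}\bigr|.
$$
The ``diagonal'' tuples, for which $r=s$ and $(y_{1},\ldots,y_{s})$ is a permutation of $(x_{1},\ldots,x_{r})$, contribute $r!\,p^{r}$ up to negligible boundary corrections from repeated entries, matching the target $r!\,\delta_{r,s}$ in exact parallel with the unitary moment computation recalled in Remark~\ref{rm-gaussian}.

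The hard part --- and where the conjecture genuinely lives --- is to show that the off-diagonal tuples contribute $o(p^{(r+s)/2+1})$. When $F$ is replaced by a polynomial of sufficient generality, this follows from Deligne's equidistribution theorem applied to the Artin--Schreier sheaf $\mathcal{L}_{\psi(aF)}$ on $\mathbf{A}^{1}_{\Fp}$, together with the identification of its geometric monodromy group as a subgroup of $\mathrm{U}_{n}$ containing $\mathrm{SU}_{n}$ (see~\cite[Th.\,7.10.6]{esde}). For the pseudopolynomial $F(n)=\lfloor e\cdot n!\rfloor$, however, no algebraic variety carries $F$ as a regular function, and there is no evident $\ell$-adic sheaf whose trace function reproduces $W_{F}(a;p)$. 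I see two possible routes. The first is to approximate $F\bmod p$ by its Lagrange interpolating polynomial of degree $p-1$ and analyze the resulting sums directly; but Weil-type bounds are trivial at that degree, and this approach looks hopeless without new analytic input. The second, more speculative but aligned with the philosophy of Section~\ref{sec-speculations}, is to develop a Chebotarev-type framework attaching to each pseudopolynomial $F$ a ``generalized Galois representation'' valued in some arithmetic avatar of $\Rep(GL_{t})$, so that Frobenius at~$p$ recovers the data $(W_{F}(a;p))_{a}$ and the gaussian equidistribution follows from a limit form of the moment identities in Remark~\ref{rm-gaussian}. I expect this second direction --- an honest extension of Chebotarev's theorem to pseudopolynomials --- to be the essential obstacle; without it, I do not see how even the first moment $\expect(|W_{F}(a;p)|^{2})\to 1$ could be proved, since it already encodes nontrivial statistics of the level sets of $F\bmod p$.
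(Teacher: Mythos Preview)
The statement you are attempting to prove is labelled a \emph{conjecture} in the paper, and the paper offers no proof; it presents the equidistribution claim as a speculation, supported only by numerical evidence and by the heuristic analogy with Katz's monodromy results for genuine polynomials. There is therefore no ``paper's own proof'' against which to compare your proposal.

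Your outline is a reasonable reconnaissance of the problem: the reduction to mixed moments via the method of moments is standard and correct, the diagonal count is right, and you correctly isolate the off-diagonal cancellation as the genuine obstruction. But your write-up is not a proof and does not pretend to be one --- you explicitly say that neither of your two proposed routes works without new input. This is an honest assessment, and it matches the paper's own stance: the authors likewise point to a hoped-for ``avatar of Chebotarev's density theorem'' and a link with $\Rep(GL_t)$, without supplying one. In short, you have not found a gap in the paper's argument, because there is no argument; you have rediscovered that the conjecture is open and identified the same circle of ideas the authors gesture toward.
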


Numerical evidence is again very convincing here. A potential link suggests itself with the category~$\Rep(GL_t)$, and even more tantalizing
is the suggestion of a form of Schur--Weyl duality relating the categories $\Rep(S_t)$ and $\Rep(GL_t)$. 

\appendix 

\section{Knop's construction of the category
  $\Rep(S_t)$}\label{appendix}

In this section, we recall the steps of Knop's construction of
tensor envelopes, specialized to the case of the opposite of the category of finite sets which leads to Deligne's category of ``representations'' of $S_t$. 

Given sets $X$, $Y$ and $Z$ with maps $f\colon Y\to X$ and
$g\colon Y\to Z$, we define the gluing~\hbox{$X\dunion_Y Z$} as the quotient
of the disjoint union $X\dunion Z$ by the smallest equivalence relation that identifies~$f(y) \in X$ with $g(y) \in Z$ for all $y\in Y$.

Recall that a partition of a set~$X$ is a set of non-empty subsets of~$X$,
pairwise disjoint and with union equal to~$X$; we will identify partitions with equivalence relations on~$X$.

Given sets $X$, $Y$ and $Z$, and partitions $\alpha$ of $X\dunion Y$ and
$\beta$ of $Y\dunion Z$, one defines a partition~$\beta\ccirc\alpha$ of
$X\dunion Z$ as follows:
\begin{itemize}
\item the equivalence class of an element~$x\in X$ is the union of the
  $\alpha$-equivalence class of~$x$ and of the set of $z\in Z$ such that
  there exists $y\in Y$ which is $\alpha$-equivalent to~$x$ and~$\beta$\nobreakdash-equivalent to~$z$; 
\item the equivalence class of an element~$z\in Z$ is the union of the
  $\beta$-equivalence class of~$z$ and of the set of $x\in X$ such that
  there exists $y\in Y$ which is $\alpha$-equivalent to~$x$ and~$\beta$\nobreakdash-equivalent to~$z$.
\end{itemize}

Using the quotient maps
\[
Y\to (X\dunion Y)/\alpha,\quad\quad Y\to (Y\dunion Z)/\beta,
\]
we define the gluing
$(X\dunion Y)/\alpha\ \dunion_Y\ (Y\dunion Z)/\beta$ as above. There is an
injective map
\[
j\colon (X\dunion Z)/\beta\ccirc \alpha \to (X\dunion Y)/\alpha\
\dunion_Y\ (Y\dunion Z)/\beta, 
\]
and we define $\gamma(\alpha,\beta)$ as the cardinality of the
complement of the image of~$j$. Concretely, this is the number of equivalence
classes of elements of~$Y$ which are not $\alpha$-equivalent to an element of~$X$ neither $\beta$-equivalent to an element of~$Z$.

We fix a ring $k$ and an element $t$ of~$k$. The category 
\[
\mcC_t=\Rep(S_t)
\] is
constructed in three~steps. One first defines a $k$-linear category $\mcC^0_t$: its objects are
finite sets, and the morphism space $\Hom_{\mcC_t^0}(X,Y)$ is the free
$k$-module generated by partitions of the finite set $X\dunion Y$. The
composition maps are the $k$-bilinear maps given by
\begin{align*}
\Hom_{\mcC_t^0}(Y,Z)\times \Hom_{\mcC_t^0}(X,Y)&\longrightarrow  \Hom_{\mcC_t^0}(X,Z) \\
(\beta,\alpha)&\longmapsto \beta\circ\alpha=t^{\gamma(\alpha,\beta)}\beta\ccirc\alpha.
\end{align*} Associativity is not obvious, and relates to
basic properties of the function $\gamma$.

If $f\colon X\to Y$ is a map of finite sets, then there is an associated
morphism $Y\to X$ in~$\mcC^0_t$ given by the smallest equivalence
relation~$\alpha_f$ on $Y\dunion X$ that identifies $x \in X$ with $f(x) \in Y$ for all $x \in X$. This construction gives rise to a contravariant functor from the
category of finite sets to the category $\mcC^0_t$ (because it is
elementary that $\gamma(\beta,\alpha)=0$ whenever $\alpha$ and $\beta$
are equivalence relations associated to maps, and hence $\alpha_{g \circ f}=\alpha_g \circ \alpha_f$ holds for composable maps $f$ and $g$); this functor is faithful.

From $\mcC^0_t$, a category $\mcC'_t$ is constructed as the category of
formal finite direct sums of objects of~$\mcC^0_t$, with morphisms given
by matrices in the obvious way. Finally, Knop's tensor envelope category
$\mcC_t$ is defined by ``adding images of projectors'': an object is a
pair $(X,p)$ of an object $X$ of $\mcC'_t$ and an endomorphism $p$ of~$X$
such that $p\circ p=p$, and
$$
\Hom_{\mcC_t}((X,p),(Y,q))=q \circ \Hom_{\mcC'_t}(X,Y)\circ p\subset
\Hom_{\mcC'_t}(X,Y).
$$

The category $\mcC^0_t$ admits a monoidal structure in the sense of \cite[Def.\,2.1.1]{etingof-al}. The tensor product bifunctor is
defined on objects as $X\otimes Y=X\dunion Y$ for finite sets $X$
and~$Y$. As for morphisms,  the tensor product
$ \alpha\otimes \beta\in \Hom_{\mcC^0_t}(X\otimes Y,X'\otimes Y')$ of $\alpha\in\Hom_{\mcC^0_t}(X,X')$ and
$\beta\in\Hom_{\mcC^0_t}(Y,Y')$ is
the equivalence relation on
\[
(X\otimes Y)\dunion (X'\otimes Y')= (X\dunion Y)\dunion (X'\dunion Y')
\]
which ``coincides'' with $\alpha$ on $X\dunion X'$ and with $\beta$ on
$Y\dunion Y'$. The commutativity constraint $X \otimes Y \stackrel{\sim}{\to} Y \otimes X$ and the associativity constraint $(X\otimes Y)\otimes Z\stackrel{\sim}{\to} X\otimes (Y\otimes Z)$ are given, respectively, by the morphisms associated to the obvious identifications $X \dunion Y=Y \dunion X$ and $(X\dunion Y)\dunion Z=X\dunion (Y\dunion Z)$. The unit object~$\mathbf{1}$ is the empty set, along with the unique morphism
$\mathbf{1}\otimes\mathbf{1}\to \mathbf{1}$.

It is then elementary that if $p$ and $q$ are projectors, then
$p\otimes q$ is also one, so the rules
\[
(X,p)\otimes (Y,q)=(X\otimes Y,p\otimes q)
\]
and bilinearity define a symmetric monoidal structure on~$\mcC_t$. 

The monoidal category $\mcC^0_t$ is rigid
(\cite[Def.\,2.10.1]{etingof-al}). Indeed, the dual of a finite set~$X$
is defined to be~$\dual(X)=X$ itself, and the evaluation and
coevaluation morphisms
$$
\mathrm{ev}_X\colon \dual(X)\otimes X\to \mathbf{1},\quad\quad
\mathrm{coev}_X\colon \mathbf{1}\to X\otimes \dual(X)
$$
are both identified with the equivalence relation on~$X\dunion X$
associated to the identity map on~$X$. For a
morphism~$\alpha\in\Hom_{\mcC^0_t}(X,Y)$, the transpose
$\tr{\alpha}\in\Hom_{\mcC^0_t}(Y,X)$ is defined as the composition
\begin{multline*}
  \dual(Y)=Y=Y\otimes \mathbf{1} \xrightarrow{\mathrm{id}\otimes
    \mathrm{coev}_X} Y\otimes (X\otimes X) \simeq (Y\otimes X)\otimes X
  \\
  \xrightarrow{(\mathrm{id}\otimes \alpha)\otimes \mathrm{id}}
  (\dual(Y)\otimes Y)\otimes X\xrightarrow{\mathrm{ev}_Y\otimes \mathrm{id}}
  \mathbf{1}\otimes X=X=\dual(X),
\end{multline*}
and corresponds to the obvious equivalence relation on~$X\dunion Y$
which is ``the same'' as $\alpha$ on~$Y\dunion X$.

The duality functor thus defined extends by linearity to $\mcC'_t$, and
finally to~$\mcC_t$: we have
\[
\dual(X,p)=(\dual(X),\mathrm{Id}_{\dual(X)}-\tr{p})
\]
for an object $(X,p)$ of $\mcC_t$, and
$\tr{(q\circ \alpha\circ p)}=\tr{p}\circ \tr{\alpha}\circ \tr{q}$ for
$q\circ \alpha\circ p\in \Hom_{\mcC_t}((X,p),(Y,q))$.

Thus, $\mcC_t$ has the structure of a rigid symmetric monoidal $k$-linear
category.

Suppose that the ring~$k$ is a field of characteristic~$0$
and~$t$ is not a non-negative integer. Then Knop proved that the category~$\mcC_t$ is semisimple~\cite[Th.\,6.1 along with Ex.\,1, p.\,596]{knop}. 

Let~$A$ be a fixed finite set and~$G=\Aut(A)$ the corresponding
symmetric group. An element $g \in G$ acts by precomposition with $g^{-1}$ on the sets of maps from $A$ to any finite set~$X$. The contravariant functor
\[
h_A(X)=\Hom_{\mathsf{Set}}(A,X)
\]
from the category of finite sets to the category
$\mathsf{Set}_G$ of finite sets with a $G$-action can be extended to a
tensor functor~$T_A\colon \mcC_t\to \mathsf{Rep}_k(G)$ of
finite-dimensional $k$\nobreakdash-linear representations of~$G$ so that the diagram
\[
\begin{tikzcd}
  \mathsf{Set}^{opp} \arrow[r,"h_A"] \arrow[d]&\mathsf{Set}_G\arrow[d]
  \\
  \mcC_t\arrow[r,"T_A"] & \mathsf{Rep}_k(G)
\end{tikzcd}
\]
commutes~\cite[proof of Th.\,9.4,\,(9.23)]{knop}, where the functor
$\mathsf{Set}_G\to \mathsf{Rep}_k(G)$ associates to a finite set~$Y$ with a
$G$-action the permutation representation of~$G$ on the free $k$-module with basis~$Y$.

\bibliographystyle{refs}
\bibliography{rencontres}

\end{document}